\def\NAT@def@citea{\def\@citea{\NAT@separator}}
\theoremstyle{plain}
\newtheorem{theorem}{Theorem}[section]
\newtheorem{lemma}[theorem]{Lemma}
\newtheorem{corollary}[theorem]{Corollary}
\newtheorem{proposition}[theorem]{Proposition}
\theoremstyle{definition}
\newtheorem{definition}[theorem]{Definition}
\newtheorem{example}[theorem]{Example}
\theoremstyle{remark}
\newtheorem{remark}{Remark}
\DeclareMathOperator{\eco}{e-conv}					
\DeclareMathOperator{\conv}{conv}
\DeclareMathOperator{\cone}{cone}
\DeclareMathOperator{\cl}{cl}
\DeclareMathOperator{\epco}{\rm{e}^{\prime} -conv} 	
\DeclareMathOperator{\ep}{\text{e}^{\prime}} 	
\DeclareMathOperator{\dom}{dom}
\DeclareMathOperator{\epi}{epi}
\def\supp{\mathop{\rm sup}}
\newcommand{\en}{\rightarrow}											
\newcommand{\R}{\mathbb{R}}												
\newcommand{\Ramp}{\overline{\R}}					
\newcommand{\xb}{\overline{x}}										
\newcommand{\ci}{\left\langle}										
\newcommand{\cd}{\right\rangle}										
\begin{document}

\articletype{ARTICLE TEMPLATE}

\title{On subdifferentials via a generalized conjugation scheme: an application to DC problems  and optimality conditions}

\author{
\name{M.~D. Fajardo\textsuperscript{a}\thanks{CONTACT M.~D. Fajardo. Email: md.fajardo@ua.es} and J. Vidal \textsuperscript{b}\thanks{J. Vidal. Email: j.vidal@uah.es}}
\affil{\textsuperscript{a}University of Alicante, Spain; \textsuperscript{b}Department of Physics and Mathematics, University of Alcal\'a de Henares, Madrid, Spain}
}

\maketitle

\begin{abstract}
This paper studies properties of a subdifferential defined using a generalized conjugation scheme. We relate this subdifferential together with the domain of an appropriate conjugate function and the $\varepsilon$-directional derivative. In addition, we also present necessary conditions for $\varepsilon$-optimality and global optimality in optimization problems involving the difference of two convex functions. These conditions will be written via this generalized notion of subdifferential studied in the first sections of the paper.
\end{abstract}

\begin{amscode}
	52A20, 26B25, 90C26, 90C46 
\end{amscode}

\begin{keywords}
Evenly convex function, generalized convex conjugation and subdifferentiability, DC problems, optimality conditions, locally convex space
\end{keywords}

\section{Introduction}
\label{sec:1}

Among the huge variety of optimization problems that can be found in real life, those whose objective function is expressed as the difference of two convex functions have attained a lot of attention since decades in the optimization community. These problems are called DC problems where DC means difference of convex functions. For an in-depth introduction as well as some applications of DC programming, we recommend the reader \cite{HT1999,CGR2018,AT2005,TA1997} and the references therein.

Given a general DC problem, there exist different approaches to study conditions for optimality. Just to mention a few, we start with the renewed paper \cite{CLA2021}, where the authors deal with optimality conditions which are necessary and sufficient  for DC semi-infinite programming using subdifferentials. In \cite{D2020} new global conditions for non-smooth DC optimization problems via affine support sets are developed, \cite{JG1996} works with DC problems under convex inequality constraints and \cite{ET2006} explores DC programming in reflexive Banach spaces. The works of \cite{DMD2009} and \cite{FZ2014} develop conditions in terms of epigraphs with infinite constraints, while \cite{JLi2011} focuses on polynomial constraints and \cite{SFu2014,Z2013} on DC programs involving composite functions and canonical DC problems, respectively. We also mention \cite{FML1998} where a group of global optimality conditions is compared using a generalized conjugation theory as framework. 

Concerning optimality conditions for global maximum of a general function in Euclidean spaces, we mention \cite{HU2001} and \cite{DHL1998}. While the former paper introduces a useful characterization of global optimality using the subdifferential and the normal cone, the latter goes beyond that. More precisely, it compares the characterization coming from \cite{HU2001} with some other necessary and sufficient conditions named Strekalovski, Singer-Toland and Canonical-dc-programming passing through their necessary assumptions to hold and their relationships.

In the current paper we will not be motivated directly by any of these conditions, but by another from \cite{HU1988} written in terms of just subdifferentials: if $f$ and $g$ are proper convex and lower semicontinuous functions, $\xb$ is a global minimizer of $f=g-h$ if and only if 
\begin{equation*}
	\partial_\varepsilon h(\xb)\subseteq \partial_\varepsilon g(\xb),~~~\text{ for all }\varepsilon\geq 0.
\end{equation*}
This subdifferential notion is linked to Fenchel conjugation scheme, in fact it is called Fenchel subdifferential in \cite{Z2002}. An evidence of this connection is the following equivalence between the subdifferential of a convex and lower semicontinuous function and the subdifferential of its Fenchel conjugate 
\begin{equation}\label{eq:FEN}
x^* \in \partial_\varepsilon f( x) \text{ if and only if } x \in \partial_\varepsilon f^* (x^*), ~~~\text{ for all } \varepsilon \geq 0.
\end{equation} 

Equivalence (\ref{eq:FEN}) holds thanks to Fenchel-Moreau theorem, which establishes the equality between a proper convex lower semicontinuous function and its Fenchel biconjugate.
Nevertheless, Fenchel conjugation scheme is not suitable (in the sense that Fenchel-Moreau theorem does not hold with them)  for a class of functions which generalizes the class of convex and lower semicontinuous functions, namely the  \emph{evenly convex functions} (see \cite{RVP2011}). These functions have epigraphs which are \emph{evenly convex sets}, i.e.,  intersections of arbitrary families (possibly empty) of open half-spaces. This kind of sets was initially defined by Fenchel \cite{F1952} in the Euclidean space, trying to extend the polarity theory to nonclosed convex sets. Later, they were applied in linear inequality systems (see \cite{GJR2003} and \cite{GR2006}), because evenly convex sets are the solution sets of linear systems containing strict inequalities. In addition, \cite{KMZ2007} contains basic properties of evenly convex sets expressed in terms of their sections and projections.

In \cite{MLVP2011} it is provided a conjugation scheme for extended real functions, called \emph{$c$-conjugation}, and a subdifferential notion associated with it, which would allow to obtain a counterpart of (\ref{eq:FEN}) for proper evenly convex functions; see \cite{FGV2021}. Another interesting application of $c$-conjugation developed in the last years has been the building of different dual problems for a primal convex one, in which strong duality property is related to the  even convexity of the functions in the primal problem. 
A very recent monograph presents the state of the art in Even Convexity and Optimization; see \cite{FGRVP2020}.

Our commitment in this paper is to develop basic, on one hand, and interesting, on the other hand, properties of the subdifferential concept associated with $c$-conjugation, and to obtain optimality conditions for DC problems via this operator.

Concerning the organization,  Section \ref{sec:2} summarizes the necessary results throughout the paper. In Section \ref{sec:3} we introduce the formal definition of the subdifferential of interest in this paper and its main properties showing its connection with a generalized notion of conjugate function. Section \ref{sec:4} is devoted to the analysis of deeper properties of this subdifferential. In particular, we will present its relationship with the domain of the conjugate function and the subdifferential of its $\varepsilon$-directional derivative. Section \ref{sec:5} develops necessary  optimality conditions for DC problems when the involved functions are proper and evenly convex. Finally, Section \ref{sec:6} summarizes the most important achievements of this paper as well as points out related open problems for future research.

\section{Preliminaries}
\label{sec:2}

We denote by $X$ a nontrivial separated locally convex space, lcs in short, equipped with the $\sigma(X,X^*)$ topology induced by $X^*$. Here, $X^*$ represents the continuous dual space of $X$ endowed with the $\sigma(X^*,X)$ topology. Given a continuous linear functional $x^*\in X^*$, $\ci x,x^*\cd$ represents its value at $x\in X$. If $D\subseteq X$, $\conv D$ and $\cl D$ stand for its convex hull and closure, respectively.
 
As we said in the previous section, Fenchel \cite{F1952} defined an evenly convex set (e-convex set, in brief) as an intersection of an arbitrary family (possibly empty) of open half-spaces. The following equivalent definition provides a very useful tool in order to identify e-convex sets.

\begin{definition}[{\cite[Def. 1]{DML2002}}] \label{defeconv}
A set $C\subseteq X$ is \emph{e-convex} if for every point $x_0\notin C$, there exists $x^*\in X^*$ such that $\ci x-x_0,x^*\cd<0$, for all $x\in C$. 
\end{definition}

Given $C\subseteq X$, we represent the smallest e-convex set in $X$ containing $C$, i.e., its \emph{e-convex hull}, by $\eco C$. If $C\subseteq X$ is convex, the inclusions $C\subseteq \eco C \subseteq \cl C$ are fulfilled. It is worthwhile adding that due to the fact that the class of e-convex sets is closed under arbitrary intersections, this operator is well defined.
By hypothesis $X$ is a separated lcs, so $X^*\neq \{0\}$. Furthermore, Hahn-Banach theorem implies not only that $X$ is e-convex, but also the fact that every closed or open convex set is e-convex, too.

If $f:X\en \Ramp$, the set $\dom f=\{ x\in X$ : $ f(x)<+\infty \}$ represents its \emph{effective domain} while $\epi f = \left\{ \left( x,r\right) \in X\times \R \, : \, f(x)\leq r\right\}$ stands for its \emph{epigraph}. The function $f$ is \emph{proper} if $f(x)>-\infty$ for all $x\in X$ and $\dom f\neq \emptyset$.
With $\cl f$ we mean the \emph{lower semicontinuous hull} of $f$, i.e., the function verifying the equality $\epi (\cl f) = \cl(\epi f)$.
We say that $f$ is \emph{lower semicontinuous}, or lsc, if $f(x)=\cl f(x)$ for all $x\in X$, and \textit{e-convex} if $\epi f$ is e-convex in the product space $X\times \R$.
It is immediate to observe that the class of lsc convex functions is contained in the class of e-convex functions. Nevertheless, this inclusion fails to be an equality between sets.
\begin{example}[{\cite[Ex. 2.1]{FV2017}}]
Let $f \colon \R \to \Ramp$ be the function
\begin{equation*} 
	f(x) =\left\{
	\begin{array}{ll}
		x,&  \text{ if } x >0,\\[0.2cm]
		+\infty,&  \text{ otherwise.}%
	\end{array}
	\right.
\end{equation*}
It is straightforward to see that
$$\epi f=\{(x,\alpha)\in \R^2:x>0,\alpha \geq x\}$$
is not closed but convex. However, it is e-convex, since for any point $(0,\alpha)$ with nonnegative $\alpha$, the hyperplane 
$H=\left \{ (x,y) \in \R^2:x=0 \right \} $ passes through the point with empty intersection with $\epi f$; recall Definition \ref{defeconv}.
\end{example}

We continue defining the \emph{e-convex hull} of a given function $f:X\rightarrow \overline{\R}$ as the largest e-convex minorant of $f$ and we denote it by $\eco f$.
Using the generalized convex conjugation theory presented in Moreau \cite{Mor1970}, a conjugation scheme appropriated for e-convex functions is given in \cite{MLVP2011}.
Let $W:=X^{\ast }\times X^{\ast }\times \R$ and \emph{the coupling functions }$c:X\times W\rightarrow \overline{\R}$ and $c^{\prime }:W\times X\rightarrow \overline{\R}$ defined by
\begin{equation} \label{equation: def c and c prime}
c(x,(x^{\ast },u^{\ast },\alpha ))=c^{\prime }\left( (x^{\ast },u^{\ast},\alpha ),x\right) :=\left\{
\begin{array}{ll}
\left\langle x,x^{\ast }\right\rangle & \text{if }\left\langle x,u^{\ast}\right\rangle <\alpha, \\
+\infty & \text{otherwise.}
\end{array}
\right.
\end{equation}
If $f:X\rightarrow \overline{\R}$ and $g:W\rightarrow \overline{\R}$ are two given functions, the \emph{$c$-conjugate} of $f$, $f^{c}:W\rightarrow \overline{\R}$, and the \emph{c$^{\prime }$-conjugate} of $g$, $g^{c^{\prime}}:X\rightarrow \overline{\R}$, are as follows
\begin{align*}
f^{c}(x^{\ast },u^{\ast },\alpha) &:= \supp_{x\in X}\left\{ c(x,(x^{\ast},u^{\ast },\alpha ))-f(x)\right\},\\
 g^{c^{\prime }}(x) &:= \supp_{(x^{\ast },u^{\ast },\alpha )\in W}\left\{c^{\prime }\left( (x^{\ast },u^{\ast },\alpha ),x\right) -g(x^{\ast},u^{\ast },\alpha )\right\},
\end{align*}
 with the sign conventions $\left( +\infty \right) +\left( -\infty \right)=\left( -\infty \right) +\left( +\infty \right) =\left( +\infty \right)-\left( +\infty \right)$ $=\left( -\infty \right) -\left( -\infty \right)=-\infty.$

In \cite{MLVP2011} it is shown that the family of pointwise suprema of sets of \emph{$c$-elementary} functions, that is, functions $x \in X \en c(x,(x^{\ast},u^{\ast },\alpha))-\beta \in \Ramp$, with $(x^{\ast},u^{\ast },\alpha)\in W$ and $\beta \in \R$, is indeed, the family of proper e-convex functions from $X$ to $\Ramp$ along with the function f $\equiv +\infty$. 

In \cite{FVR2012} it is defined the notion of  \emph{ e$^{\prime}$-convex} function as that function $g:W \en \Ramp$ which is the pointwise supremum of sets of \emph {$c^{\prime}$-elementary} functions, that is, functions $(x^{\ast},u^{\ast },\alpha)\in W \en  c(x,(x^{\ast},u^{\ast },\alpha))-\beta \in \Ramp$ with  $x \in X $ and $\beta \in \R$.  Moreover, the \emph{e$^{\prime}$-convex hull} of any function $g:W \en \Ramp$, ${\epco}g$,  is its largest e$^{\prime}$-convex minorant. The epigraphs of e$^{\prime}$-convex functions are called \emph{e$^{\prime}$-convex} sets, and for any set $D \subset W \times \R$ , its \emph{e$^{\prime}$-convex hull}, denoted by $\epco D$, is the smallest e$^{\prime}$-convex set that contains $D$. We recommend the reader \cite{FV2020} for characterizations of $\ep$-convex sets and additional properties of $\ep$-convex functions.\\

We close this preliminary section with a result that shows the suitability of the $c$-conjugation scheme for e-convex functions and represents the counterpart of Fenchel-Moreau theorem for them.
\begin{theorem}[{\cite[Prop.~6.1,~Prop.~6.2,~Cor.~6.1]{ML2005}}]
\label{th:Theorem2.2}
Let $f:X\en \R\cup \left\{+\infty\right\}$ and $g:W \en \Ramp$ be two functions. Then
\begin{enumerate}
	\item[i)] $f^{c}$ is e$^{\prime}$-convex; $g^{c^{\prime }}$ is e-convex.
	\item[ii)]${\eco}f=f^{c{c^{\prime}}}$ and ${\epco}g=g^{{c^{\prime}}c}$.
	\item[iii)] $f$ is e-convex if and only if $f^{c c^{\prime}}=f$; $g$ is e$^{\prime}$-convex if and only if  $g^{ c^{\prime} c}=g$.
	\item[iv)] $f^{cc^{\prime}} \leq f$; $g^{c^{\prime} c} \leq g$.
\end{enumerate}
\end{theorem}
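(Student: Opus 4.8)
The plan is to recognize Theorem~\ref{th:Theorem2.2} as a concrete instance of Moreau's abstract conjugation duality \cite{Mor1970} for the transpose coupling pair $(c,c^{\prime})$ of (\ref{equation: def c and c prime}), combined with the identification recalled in Section~\ref{sec:2} of the pointwise suprema of $c$-elementary (resp.\ $c^{\prime}$-elementary) functions as precisely the e-convex (resp.\ e$^{\prime}$-convex) functions. Nothing about the special form of $c$ is used; only that $c^{\prime}$ is its transpose.

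I would first dispatch part iv). Straight from the definition of $f^{c}$ one reads off the Fenchel--Young-type inequality $c(x,w)\le f(x)+f^{c}(w)$ for all $x\in X$, $w\in W$ (with the stated sign conventions, and the usual care when a term is infinite); rearranging it into $c^{\prime}(w,x)-f^{c}(w)\le f(x)$ and taking the supremum over $w\in W$ yields $f^{cc^{\prime}}(x)=(f^{c})^{c^{\prime}}(x)\le f(x)$. The inequality $g^{c^{\prime}c}\le g$ follows by the symmetric computation.

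Part i) I would get directly from the suprema defining the conjugates. Since $f^{c}(w)=\supp_{x\in X}\{c^{\prime}(w,x)-f(x)\}$ and, for each $x\in\dom f$, the map $w\in W\mapsto c^{\prime}(w,x)-f(x)\in\Ramp$ is by construction a $c^{\prime}$-elementary function (with parameter $\beta=f(x)\in\R$), while for $x\notin\dom f$ the corresponding term is constantly $-\infty$ and irrelevant to the supremum, $f^{c}$ is a pointwise supremum of a set of $c^{\prime}$-elementary functions (the empty set, giving $-\infty$, when $f\equiv+\infty$); by the definition recalled in Section~\ref{sec:2} this is exactly what it means for $f^{c}$ to be e$^{\prime}$-convex. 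Interchanging the roles of $X$ and $W$ and of $c$ and $c^{\prime}$, the same remark shows $g^{c^{\prime}}$ is a pointwise supremum of $c$-elementary functions and hence e-convex.

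Parts ii) and iii) then fall out of the abstract biconjugation theorem, which says that $f^{cc^{\prime}}$ is the largest minorant of $f$ expressible as a pointwise supremum of $c$-elementary functions, and $g^{c^{\prime}c}$ the largest minorant of $g$ expressible as a pointwise supremum of $c^{\prime}$-elementary functions (this theorem itself costs only the two-line Young inequality above together with a routine interchange of suprema). Using the identification from \cite{MLVP2011} (resp.\ the definition from \cite{FVR2012}) of such suprema with the proper e-convex functions plus the constant $+\infty$ (resp.\ with the e$^{\prime}$-convex functions), this largest minorant is precisely $\eco f$ (resp.\ $\epco g$), which is ii). Part iii) is then immediate: if $f$ is e-convex it is its own largest e-convex minorant, so $f=\eco f=f^{cc^{\prime}}$ by ii); conversely if $f=f^{cc^{\prime}}=(f^{c})^{c^{\prime}}$, then $f$ is a $c^{\prime}$-conjugate and so e-convex by i); the mirror argument handles $g$. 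The one place that needs care is the bridge step in ii): one must invoke exactly the theorem that the pointwise suprema of $c$-elementary functions exhaust the proper e-convex functions (together with $+\infty$), and separately deal with the degenerate situations in which $f$, $g$, or one of the conjugates is identically $\pm\infty$ — the epigraphs $\emptyset$ and $X\times\R$ are e-convex but are not covered by that statement as phrased — so as to be sure that ``largest such minorant'' and ``$\eco f$'' denote the same function; everything else is an unwinding of definitions.
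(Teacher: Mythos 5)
The paper quotes this theorem directly from \cite{ML2005} and gives no proof of its own, so there is no in-paper argument to compare you against; your Moreau-style derivation (Fenchel--Young inequality for iv), reading the conjugates as suprema of elementary functions for i), abstract biconjugation plus the identification of such suprema with e-convex functions for ii), and iii) as a formal consequence) is the standard route and is, in substance, how the cited source proceeds. Parts i) and iv) are correct as written, and the deduction of iii) from i) and ii) is clean. The one genuinely delicate step is the bridge in ii) that you yourself flag, but the problematic situation is broader than ``$f$, $g$ or a conjugate identically $\pm\infty$'': what biconjugation delivers is the largest minorant of $f$ expressible as a supremum of $c$-elementary functions, and this can differ from the largest minorant with e-convex epigraph whenever the latter is improper without being identically $-\infty$. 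For instance, $f(x)=-1/x$ on $(0,1]$ and $+\infty$ elsewhere admits no $c$-elementary minorant, so $f^{cc^{\prime}}\equiv-\infty$, while the largest function with e-convex epigraph lying below $f$ equals $-\infty$ on $(0,1]$ and $+\infty$ outside it. Hence the identity $\eco f=f^{cc^{\prime}}$ rests on the convention adopted in \cite{MLVP2011} and \cite{ML2005} for the e-convex hull of functions having no $c$-elementary minorant, exactly as the classical $\cl\conv f=f^{**}$ requires the convention that the closure of an improper convex function is $-\infty$ everywhere. Once that convention is in place your argument closes, and everything else is a correct unwinding of definitions.
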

%

\section{First properties of the $c$-subdifferential }
\label{sec:3}

Given a function, its subdifferentiability at a point associated with the $c$-conjugation scheme was considered in \cite{MLVP2011} as a particularization of the $c$-subdifferentiability introduced first in \cite{BA1977}. 

\begin{definition}[{\cite[Def. 44]{MLVP2011}}]\label{def:c-subgradient}
Let $f:X\en \Ramp$ be a proper function. A vector $ (x^*, u^*, \alpha) \in W$ is a \textit{$c$-subgradient} of $f$ at $x_{0} \in X$ if  $f(x_{0}) \in \R, \langle x_{0}, u^* \rangle < \alpha$ and, for all $x \in X$,
\begin{equation*}
f(x)-f(x_{0}) \geq c(x,(x^*, u^*, \alpha))-c(x_{0}, (x^*, u^*, \alpha)).
\end{equation*}
The set of all $c$-subgradients of $f$ at $x_{0}$ is denoted by $\partial_{c}f(x_{0})$ and is called the \textit{$c$-subdifferential set} of $f$ at $x_0$. In the case $f(x_{0}) \notin \R$, it is set $\partial_{c}f(x_{0})=\emptyset$.
\end{definition}

 In \cite{FGV2021} it was introduced the notion of $c^{\prime}$-subdifferentiability, following \cite{BA1977} too.

\begin{definition}[{\cite[Def. 4.2]{FGV2021}}]\label{def:cprime-subgradient}
Let $g:W\en \Ramp$ be a proper function. Then, $ x \in X$ is a \textit{$c^{\prime}$-subgradient} of $g$ at $(x^*_0,u^*_0,\alpha_0) \in W$ if  $g(x^*_0,u^*_0,\alpha_0)  \in \R, \langle x, u_0^* \rangle < \alpha_0$ and, for all $(x^*,u^*,\alpha) \in W$,
\begin{equation*}
	g(x^*,u^*,\alpha)-g(x^*_0,u^*_0,\alpha_0) \geq c^{\prime}((x^*, u^*, \alpha),x)-c^{\prime}((x^*_0, u^*_0, \alpha_0),x).
\end{equation*}
The set of all $c^{\prime}$-subgradients of $g$ at $(x^*_0,u^*_0,\alpha_0)$ is denoted by $\partial_{c^{\prime}}g(x^*_0,u^*_0,\alpha_0)$ and it is called the \textit{$c^{\prime}$-subdifferential set} of $g$ at $(x^*_0,u^*_0,\alpha_0)$.
 In the case $g(x^*_0,u^*_0,\alpha_0) \notin \R$, it is set $\partial_{c^{\prime}}g(x^*_0,u^*_0,\alpha_0)=\emptyset$.
\end{definition}
The following standard notation will be used throughout the paper. Given fix points $u^* \in X^*$  ($(x,u, \beta) \in X \times X \times \R$, resp.) and $\gamma \in \R$, we denote an open hyperplane in $X$ (in $W$, resp.) by
\begin{equation*}
	H_{u^*, \gamma}^{<}=\left \{ x \in X: \left \langle x, u^* \right \rangle < \gamma \right \}
\end{equation*}
and
\begin{equation*}
	H_{(x, u, \beta), \gamma} ^{<}= \lbrace (x^*, u^*, \alpha) \in W: \langle x, x^* \rangle +  \langle u, u^* \rangle + \alpha \beta <\gamma \rbrace,
\end{equation*}
respectively. 
According to \cite{MLVP2011}, given $f:X \en \Ramp$, its $c$-subdifferential at $x_0 \in \dom f$ can be written as
\begin{equation}\label{id:subd_and_c_subd}
\partial_c f(x_0)=\partial f( x_0) \times \left \{ (u^*, \alpha) \in X^* \times \R: \dom f \subseteq H_{u^*, \alpha}^{<} \right \},
\end{equation}
where $\partial f$ stands for the classical (Fenchel) subdifferential. This relation between the standard subdifferential of $f$, $\partial f$, and its $c$-subdifferential, $\partial _c f$, will play a fundamental role throughout next sections.\\

The following results are proved in \cite{FGV2021}. Lemmas \ref{lemma 5} and \ref{lemma4_5} state the counterparts of \cite[Prop.~5.1, Ch.~I]{ET1976} for $c$-subdifferentials and $c^{\prime}$-subdifferentials, respectively, whereas Proposition \ref{Proposition4_6} extends \cite[Cor.~23.5.1]{R1970} to the $c$-conjugation scheme.

\begin{lemma}[{\cite[Lem. 4.3]{FGV2021}}] \label{lemma 5}
Let $f:X\en \Ramp$ be a proper function and $x_{0} \in \dom f$. Then  $ (x^*,u^*, \alpha) \in  \partial_{c}f(x_{0})$ if and only if $\langle x_{0}, u^* \rangle<\alpha$ and
$f(x_{0}) +f^{c} (x^*,u^*, \alpha)= c(x_{0},(x^*,u^*, \alpha))$.
\end{lemma}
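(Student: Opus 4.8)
The plan is to prove this as the $c$-conjugation counterpart of the classical Fenchel--Young equality: each of the two conditions will be unfolded into the inequalities that define it, and the two descriptions will then be matched through the definition of $f^{c}$ as a pointwise supremum. Throughout I would keep in mind the two finiteness facts that make every rearrangement below legitimate: since $x_{0}\in\dom f$ and $f$ is proper, $f(x_{0})\in\R$; and since $\langle x_{0},u^{*}\rangle<\alpha$, we have $c(x_{0},(x^{*},u^{*},\alpha))=\langle x_{0},x^{*}\rangle\in\R$.

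For the forward implication, I would start from the defining inequality of a $c$-subgradient in Definition \ref{def:c-subgradient}, namely $f(x)-f(x_{0})\geq c(x,(x^{*},u^{*},\alpha))-c(x_{0},(x^{*},u^{*},\alpha))$ for every $x\in X$, and rewrite it---using that the subtracted term is finite---in the equivalent form $c(x,(x^{*},u^{*},\alpha))-f(x)\leq c(x_{0},(x^{*},u^{*},\alpha))-f(x_{0})$. Taking the supremum over $x\in X$ on the left gives $f^{c}(x^{*},u^{*},\alpha)\leq c(x_{0},(x^{*},u^{*},\alpha))-f(x_{0})$, while evaluating the supremum defining $f^{c}$ at the single point $x_{0}$ gives the reverse inequality. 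Since the two terms $f(x_{0})$ and $c(x_{0},(x^{*},u^{*},\alpha))$ are real, this yields $f(x_{0})+f^{c}(x^{*},u^{*},\alpha)=c(x_{0},(x^{*},u^{*},\alpha))$; the condition $\langle x_{0},u^{*}\rangle<\alpha$ is already part of Definition \ref{def:c-subgradient}.

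For the converse, I would assume $\langle x_{0},u^{*}\rangle<\alpha$ and $f(x_{0})+f^{c}(x^{*},u^{*},\alpha)=c(x_{0},(x^{*},u^{*},\alpha))$. Finiteness of $f(x_{0})$ and of $c(x_{0},(x^{*},u^{*},\alpha))=\langle x_{0},x^{*}\rangle$ forces $f^{c}(x^{*},u^{*},\alpha)\in\R$. I would then invoke the inequality $f^{c}(x^{*},u^{*},\alpha)\geq c(x,(x^{*},u^{*},\alpha))-f(x)$, which holds for every $x\in X$ directly from the definition of $f^{c}$, substitute $f^{c}(x^{*},u^{*},\alpha)=c(x_{0},(x^{*},u^{*},\alpha))-f(x_{0})$, and rearrange to recover precisely $f(x)-f(x_{0})\geq c(x,(x^{*},u^{*},\alpha))-c(x_{0},(x^{*},u^{*},\alpha))$ for all $x\in X$. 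Together with $f(x_{0})\in\R$ and $\langle x_{0},u^{*}\rangle<\alpha$, Definition \ref{def:c-subgradient} is fulfilled, so $(x^{*},u^{*},\alpha)\in\partial_{c}f(x_{0})$.

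I do not expect a genuine obstacle; the only point requiring care is the bookkeeping of the extended-real sign conventions when $c(x,(x^{*},u^{*},\alpha))=+\infty$, i.e.\ when $\langle x,u^{*}\rangle\geq\alpha$. One must verify that passing between the subgradient inequality and its rearranged form stays valid in $\Ramp$, which it does precisely because $c(x_{0},(x^{*},u^{*},\alpha))$ and $f(x_{0})$ are finite. As a by-product, in the converse direction the finiteness of $f^{c}(x^{*},u^{*},\alpha)$ automatically forces $\dom f\subseteq H_{u^{*},\alpha}^{<}$, in agreement with (\ref{id:subd_and_c_subd}).
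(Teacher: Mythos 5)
Your proof is correct: the paper itself states this lemma without proof (it is imported from \cite[Lem.~4.3]{FGV2021}), and your argument is exactly the standard Fenchel--Young-type derivation one expects there --- unfolding the subgradient inequality, taking the supremum defining $f^{c}$, and recovering the reverse inequality by evaluating at $x_{0}$. Your attention to the extended-real conventions (in particular that $c(x_{0},(x^{*},u^{*},\alpha))$ and $f(x_{0})$ are finite, which is what makes every rearrangement legitimate, and that $c(x,(x^{*},u^{*},\alpha))=+\infty$ forces $f(x)=+\infty$ in both directions) is precisely the point requiring care, and you handle it properly.
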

%

\begin{lemma}[{\cite[Lem. 4.4]{FGV2021}}] \label{lemma4_5}
Let $g\colon W\to\Ramp$ be a proper function and $(x^*_0,u^*_0,\alpha_0)\in\dom g$.
Then $x\in\partial_{c^{\prime}}g(x^*_0,u^*_0,\alpha_0)$ if and only if $\ci x,u^*_0\cd <\alpha_0$ and
\begin{equation}\label{eq:Equality_cprime_subdifferential}
	g(x^*_0,u^*_0,\alpha_0) + g^{c^{\prime}}(x) = c^{\prime}((x^*_0,u^*_0,\alpha_0),x).
\end{equation}
\end{lemma}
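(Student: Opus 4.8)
The plan is to mirror, step for step, the argument behind Lemma~\ref{lemma 5}, simply interchanging the roles of $X$ and $W$ and of the couplings $c$ and $c^{\prime}$. Fix $(x^*_0,u^*_0,\alpha_0)\in\dom g$; since $g$ is proper this already means $g(x^*_0,u^*_0,\alpha_0)\in\R$. The remark that makes everything work is that, under the standing constraint $\ci x,u^*_0\cd<\alpha_0$, the definition (\ref{equation: def c and c prime}) gives $c^{\prime}((x^*_0,u^*_0,\alpha_0),x)=\ci x,x^*_0\cd\in\R$; having both $g(x^*_0,u^*_0,\alpha_0)$ and $c^{\prime}((x^*_0,u^*_0,\alpha_0),x)$ finite is precisely what legitimizes transposing these two terms across inequalities in the manipulations below.

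For the direct implication, assume $x\in\partial_{c^{\prime}}g(x^*_0,u^*_0,\alpha_0)$. By Definition~\ref{def:cprime-subgradient} we have $\ci x,u^*_0\cd<\alpha_0$, and moving the (finite) terms evaluated at $(x^*_0,u^*_0,\alpha_0)$ to the other side of the subgradient inequality yields
\begin{equation*}
c^{\prime}((x^*_0,u^*_0,\alpha_0),x)-g(x^*_0,u^*_0,\alpha_0)\ \geq\ c^{\prime}((x^*,u^*,\alpha),x)-g(x^*,u^*,\alpha)\qquad\text{for all }(x^*,u^*,\alpha)\in W.
\end{equation*}
Taking the supremum over $(x^*,u^*,\alpha)\in W$ on the right gives $c^{\prime}((x^*_0,u^*_0,\alpha_0),x)-g(x^*_0,u^*_0,\alpha_0)\geq g^{c^{\prime}}(x)$, whereas evaluating the defining supremum of $g^{c^{\prime}}(x)$ at the particular point $(x^*_0,u^*_0,\alpha_0)$ gives the opposite inequality. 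Hence equality holds, i.e. (\ref{eq:Equality_cprime_subdifferential}).

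For the converse, suppose $\ci x,u^*_0\cd<\alpha_0$ and that (\ref{eq:Equality_cprime_subdifferential}) holds. Since the two terms at $(x^*_0,u^*_0,\alpha_0)$ are real, (\ref{eq:Equality_cprime_subdifferential}) forces $g^{c^{\prime}}(x)\in\R$. The elementary inequality built into the definition of $g^{c^{\prime}}$ as a pointwise supremum reads $g^{c^{\prime}}(x)\geq c^{\prime}((x^*,u^*,\alpha),x)-g(x^*,u^*,\alpha)$ for every $(x^*,u^*,\alpha)\in W$; substituting (\ref{eq:Equality_cprime_subdifferential}) and rearranging recovers
\begin{equation*}
g(x^*,u^*,\alpha)-g(x^*_0,u^*_0,\alpha_0)\ \geq\ c^{\prime}((x^*,u^*,\alpha),x)-c^{\prime}((x^*_0,u^*_0,\alpha_0),x)\qquad\text{for all }(x^*,u^*,\alpha)\in W,
\end{equation*}
which, together with $g(x^*_0,u^*_0,\alpha_0)\in\R$ and $\ci x,u^*_0\cd<\alpha_0$, is exactly the assertion $x\in\partial_{c^{\prime}}g(x^*_0,u^*_0,\alpha_0)$ in the sense of Definition~\ref{def:cprime-subgradient}.

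The only genuinely delicate point is the bookkeeping with $\pm\infty$: one must verify that the two rearrangements above remain valid when $g(x^*,u^*,\alpha)$ or $c^{\prime}((x^*,u^*,\alpha),x)$ takes the value $+\infty$, using the sign conventions adopted in Section~\ref{sec:2}. In each such case one side of the inequality degenerates to $+\infty$ or $-\infty$ and the inequality is either trivially true or rules the configuration out (e.g.\ $c^{\prime}((x^*,u^*,\alpha),x)=+\infty$ forces $g(x^*,u^*,\alpha)=+\infty$), so no generality is lost. This is the step I would write out in full; the remainder is just the ``pointwise versus supremum'' duality already exploited for Lemma~\ref{lemma 5}.
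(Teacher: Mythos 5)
Your argument is correct and is precisely the standard Fenchel--Young--type equivalence that the paper imports from \cite[Lem.~4.4]{FGV2021} without reproducing its proof; it mirrors the proof of Lemma~\ref{lemma 5} exactly as intended, and your handling of the $\pm\infty$ cases (finiteness of $g(x^*_0,u^*_0,\alpha_0)$ and of $c^{\prime}((x^*_0,u^*_0,\alpha_0),x)$ justifying the transpositions, with $c^{\prime}((x^*,u^*,\alpha),x)=+\infty$ forcing $g(x^*,u^*,\alpha)=+\infty$ under the stated sign conventions) is the right bookkeeping. No gaps.
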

%

\begin{proposition}[{\cite[Prop.  4.5]{FGV2021}}] \label{Proposition4_6}
Let $f\colon X\to \Ramp$ be a proper function and $x_0 \in \dom f$. If $(x^*,u^*,\alpha) \in \partial_c f(x_0)$, then $ x_0 \in \partial_{c^{\prime}} f^c(x^*,u^*,\alpha)$. The converse statement holds if $f$ is e-convex.
\end{proposition}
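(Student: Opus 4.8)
The plan is to reduce both implications to the Young–Fenchel-type equality characterizations already available, namely Lemma \ref{lemma 5} for the $c$-subdifferential and Lemma \ref{lemma4_5} for the $c^{\prime}$-subdifferential, together with the biconjugation facts in Theorem \ref{th:Theorem2.2}. First I would assume $(x^*,u^*,\alpha)\in\partial_c f(x_0)$ with $x_0\in\dom f$. By Lemma \ref{lemma 5} this means $\ci x_0,u^*\cd<\alpha$ and
\begin{equation*}
	f(x_0)+f^c(x^*,u^*,\alpha)=c(x_0,(x^*,u^*,\alpha))=\ci x_0,x^*\cd,
\end{equation*}
the last equality holding precisely because $\ci x_0,u^*\cd<\alpha$. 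Note in particular that $f^c(x^*,u^*,\alpha)\in\R$, since $f(x_0)\in\R$ and the sum equals the real number $\ci x_0,x^*\cd$; hence $(x^*,u^*,\alpha)\in\dom f^c$, which is the domain condition needed to even speak of $\partial_{c^{\prime}}f^c(x^*,u^*,\alpha)$.

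Next I would apply Lemma \ref{lemma4_5} to the function $g:=f^c:W\to\Ramp$ at the point $(x^*,u^*,\alpha)\in\dom f^c$: to conclude $x_0\in\partial_{c^{\prime}}f^c(x^*,u^*,\alpha)$ it suffices to check $\ci x_0,u^*\cd<\alpha$ — already done — and the equality
\begin{equation*}
	f^c(x^*,u^*,\alpha)+(f^c)^{c^{\prime}}(x_0)=c^{\prime}((x^*,u^*,\alpha),x_0)=\ci x_0,x^*\cd.
\end{equation*}
Now $(f^c)^{c^{\prime}}=f^{cc^{\prime}}$, and by Theorem \ref{th:Theorem2.2}(ii) this equals $\eco f$, so the required equality reads $f^c(x^*,u^*,\alpha)+(\eco f)(x_0)=\ci x_0,x^*\cd$. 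Comparing with the equality obtained from Lemma \ref{lemma 5}, this is equivalent to $(\eco f)(x_0)=f(x_0)$. Since $\eco f\le f$ always (indeed $f^{cc^{\prime}}\le f$ by Theorem \ref{th:Theorem2.2}(iv)), I only need the reverse inequality at $x_0$; but from Lemma \ref{lemma 5} and the nonnegativity-type bookkeeping, $f(x_0)=\ci x_0,x^*\cd-f^c(x^*,u^*,\alpha)$, while by the definition of $(f^c)^{c^{\prime}}$ as a supremum we have $(f^c)^{c^{\prime}}(x_0)\ge c^{\prime}((x^*,u^*,\alpha),x_0)-f^c(x^*,u^*,\alpha)=\ci x_0,x^*\cd-f^c(x^*,u^*,\alpha)=f(x_0)$, giving $\eco f(x_0)\ge f(x_0)$ and hence equality. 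Plugging back yields the displayed equality, so Lemma \ref{lemma4_5} gives $x_0\in\partial_{c^{\prime}}f^c(x^*,u^*,\alpha)$, proving the first implication.

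For the converse, assume $f$ is e-convex and $x_0\in\partial_{c^{\prime}}f^c(x^*,u^*,\alpha)$; in particular $f^c(x^*,u^*,\alpha)\in\R$, $\ci x_0,u^*\cd<\alpha$, and by Lemma \ref{lemma4_5}, $f^c(x^*,u^*,\alpha)+f^{cc^{\prime}}(x_0)=\ci x_0,x^*\cd$. By Theorem \ref{th:Theorem2.2}(iii), $f$ e-convex gives $f^{cc^{\prime}}=f$ (and $f$ is proper — here I would note that $x_0\in\dom f^{cc^{\prime}}=\dom f$ since $f^{cc^{\prime}}(x_0)=\ci x_0,x^*\cd-f^c(x^*,u^*,\alpha)\in\R$), so the equality becomes $f(x_0)+f^c(x^*,u^*,\alpha)=\ci x_0,x^*\cd=c(x_0,(x^*,u^*,\alpha))$ with $\ci x_0,u^*\cd<\alpha$, which is exactly the condition in Lemma \ref{lemma 5} for $(x^*,u^*,\alpha)\in\partial_c f(x_0)$. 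The main thing to watch is the careful bookkeeping with the $(+\infty)-(+\infty)=-\infty$ conventions and ensuring at each step that the quantities involved are genuinely real (so that the subdifferential sets are nonempty-eligible and the cancellations are legitimate); the e-convexity hypothesis enters only through $f^{cc^{\prime}}=f$, which is precisely where it is indispensable — without it one only has $f^{cc^{\prime}}=\eco f\le f$ and the converse can fail.
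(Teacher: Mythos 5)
Your argument is correct: the paper itself imports this proposition from \cite{FGV2021} without proof, and your route --- characterizing both subdifferentials via the Young--Fenchel-type equalities of Lemmas \ref{lemma 5} and \ref{lemma4_5}, establishing $f^{cc^{\prime}}(x_0)=f(x_0)$ from the supremum defining $(f^c)^{c^{\prime}}$, and invoking $f^{cc^{\prime}}=f$ under even convexity for the converse --- is exactly the intended one. The bookkeeping (finiteness of $f^c(x^*,u^*,\alpha)$, hence $(x^*,u^*,\alpha)\in\dom f^c$ and properness of $f^c$, so that Lemma \ref{lemma4_5} applies) is handled correctly.
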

%

Now we present the counterparts of some well-known results in classical subdifferential theory. The next theorem sums up some basic properties involving $c$-subdifferential and $c^{\prime}$-subdifferential sets.
\begin{theorem}\label{theorem1}
Let $f\colon X\to \Ramp$  and $g\colon W\to \Ramp$  be  proper functions, $ x_0 \in X$ and $( x_0^*, u_0 ^*, \alpha_0) \in W$. Then:
\begin{enumerate}
	\item[i)] $\partial_c f(x_0) \subseteq W$ and $\partial_{c^{\prime}} g( x_0^*, u_0 ^*, \alpha_0) \subseteq X$ are e-convex sets.
	\item[ii)] If  $\partial_c f(x_0) \neq  \emptyset$, then ${\eco}f(x_0)=f(x_0)$ and, moreover,
	\begin{equation*}
		\partial_c ({\eco}f)(x_0)=\partial_c f(x_0).
	\end{equation*}
	\item[iii)] If $\partial_{c^{\prime}} g( x_0^*, u_0 ^*, \alpha_0) \neq  \emptyset$, then $({\epco}g)( x_0^*, u_0 ^*, \alpha_0) =g( x_0^*, u_0 ^*, \alpha_0)$ and, moreover,
	\begin{equation*}
		\partial_{c^{\prime}} (\epco g)( x_0^*, u_0 ^*, \alpha_0)=  \partial_{c^{\prime}} g( x_0^*, u_0 ^*, \alpha_0).
	\end{equation*}
	\item[iv)] If $x_0 \in \partial_{c^{\prime}} g( x_0^*, u_0 ^*, \alpha_0)$ and $\dom g \subseteq H_{(0, x_0, -1),0} ^{<}$, then $(x_0,0,0) \in \partial g( x_0^*, u_0 ^*, \alpha_0)$.
\end{enumerate}
\end{theorem}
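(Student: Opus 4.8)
The four assertions of Theorem \ref{theorem1} are of rather different flavors, so I would treat each separately, reusing the two lemmas and the proposition that immediately precede the statement.

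For item i), the plan is to exhibit $\partial_c f(x_0)$ and $\partial_{c'}g(x_0^*,u_0^*,\alpha_0)$ as intersections of open half-spaces (in $W$ and in $X$, respectively), since by Definition \ref{defeconv} that is exactly what an e-convex set is. If $f(x_0)\notin\R$ the set is empty, hence trivially e-convex, so assume $x_0\in\dom f$. From the subgradient inequality in Definition \ref{def:c-subgradient}, for each fixed $x\in\dom f$ the condition $f(x)-f(x_0)\ge c(x,(x^*,u^*,\alpha))-c(x_0,(x^*,u^*,\alpha))$ rearranges into a (possibly strict) linear inequality in the variable $(x^*,u^*,\alpha)\in W$ --- one must use the definition \eqref{equation: def c and c prime} of $c$, noting that the constraint $\langle x,u^*\rangle<\alpha$ appears precisely when we require $c(x,(x^*,u^*,\alpha))$ finite. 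Intersecting over all $x\in\dom f$ and also intersecting with the open half-space $\{(x^*,u^*,\alpha):\langle x_0,u^*\rangle<\alpha\}=H^{<}_{(0,-x_0,1),0}$ (in the notation introduced right before the theorem) gives $\partial_c f(x_0)$ as an intersection of open half-spaces, hence e-convex. The same argument, verbatim, with $c'$ and Definition \ref{def:cprime-subgradient} gives the statement for $\partial_{c'}g$. Alternatively, and more slickly, one can invoke \eqref{id:subd_and_c_subd}: $\partial_c f(x_0)=\partial f(x_0)\times\{(u^*,\alpha):\dom f\subseteq H^{<}_{u^*,\alpha}\}$, observe $\partial f(x_0)$ is closed convex hence e-convex, observe the second factor is an intersection of open half-spaces hence e-convex, and note a product of e-convex sets is e-convex.

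For items ii) and iii), the plan is to use Lemma \ref{lemma 5} (resp. Lemma \ref{lemma4_5}) together with part iv) of Theorem \ref{th:Theorem2.2}. Suppose $(x^*,u^*,\alpha)\in\partial_c f(x_0)\ne\emptyset$. By Lemma \ref{lemma 5}, $f(x_0)+f^c(x^*,u^*,\alpha)=c(x_0,(x^*,u^*,\alpha))$, and since $x_0\in\dom f$ this forces $f^c(x^*,u^*,\alpha)\in\R$; rearranging, $f(x_0)=c(x_0,(x^*,u^*,\alpha))-f^c(x^*,u^*,\alpha)\le\supp_{(y^*,v^*,\beta)}\{c(x_0,(y^*,v^*,\beta))-f^c(y^*,v^*,\beta)\}=f^{cc'}(x_0)=(\eco f)(x_0)$, the last equality by Theorem \ref{th:Theorem2.2} ii). Since always $(\eco f)(x_0)\le f(x_0)$, equality follows. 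For the second claim in ii), I would show the subgradient inequality defining $\partial_c(\eco f)(x_0)$ and that defining $\partial_c f(x_0)$ coincide given $(\eco f)(x_0)=f(x_0)$: the inclusion $\partial_c f(x_0)\subseteq\partial_c(\eco f)(x_0)$ uses $\eco f\le f$ on the left side of the inequality together with the value agreement at $x_0$; the reverse uses that $\eco f$ is e-convex and a standard argument that $c$-subgradients of a function and of its $c$-biconjugate coincide at points where the values agree --- concretely, via Lemma \ref{lemma 5} applied to $\eco f$ and the identity $(\eco f)^c=f^{cc'c}=f^c$ (the last by Theorem \ref{th:Theorem2.2}, since $f^c$ is $\ep$-convex). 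Item iii) is the symmetric statement, proved the same way with Lemma \ref{lemma4_5}, $\epco g=g^{c'c}$, and $(\epco g)^{c'}=g^{c'cc'}=g^{c'}$.

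For item iv), suppose $x_0\in\partial_{c'}g(x_0^*,u_0^*,\alpha_0)$, so that $g(x_0^*,u_0^*,\alpha_0)\in\R$, $\langle x_0,u_0^*\rangle<\alpha_0$, and for all $(x^*,u^*,\alpha)\in W$, $g(x^*,u^*,\alpha)-g(x_0^*,u_0^*,\alpha_0)\ge c'((x^*,u^*,\alpha),x_0)-c'((x_0^*,u_0^*,\alpha_0),x_0)$. The hypothesis $\dom g\subseteq H^{<}_{(0,x_0,-1),0}$ says precisely that every $(x^*,u^*,\alpha)\in\dom g$ satisfies $\langle x_0,u^*\rangle-\alpha<0$, i.e. $\langle x_0,u^*\rangle<\alpha$, which is exactly the condition under which $c'((x^*,u^*,\alpha),x_0)=\langle x_0,x^*\rangle$ is finite (and for $(x^*,u^*,\alpha)\notin\dom g$ the inequality is automatic since the left side is $+\infty$). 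Hence the subgradient inequality reduces, for all $(x^*,u^*,\alpha)\in\dom g$, to $g(x^*,u^*,\alpha)-g(x_0^*,u_0^*,\alpha_0)\ge\langle x_0,x^*\rangle-\langle x_0,x_0^*\rangle=\langle x^*-x_0^*,x_0\rangle+\langle u^*-u_0^*,0\rangle+(\alpha-\alpha_0)\cdot 0$, which is exactly the statement $(x_0,0,0)\in\partial g(x_0^*,u_0^*,\alpha_0)$ in the sense of the ordinary (Fenchel) subdifferential of $g$ on $W$ (viewing $W=X^*\times X^*\times\R$ paired with $X\times X\times\R$). I expect the main obstacle across the whole theorem to be bookkeeping in items ii)--iii): one must be careful with the cases $f^c(x^*,u^*,\alpha)=\pm\infty$ and with verifying that the auxiliary conjugation identities like $(\eco f)^c=f^c$ really follow from Theorem \ref{th:Theorem2.2}, rather than any single step being deep.
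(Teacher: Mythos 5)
Your proposal is correct, and its overall skeleton matches the paper's: items ii) and iii) rest on Lemma \ref{lemma 5} (resp.\ Lemma \ref{lemma4_5}) together with the identity $(\eco f)^c=(f^{cc^{\prime}})^c=f^c$ from Theorem \ref{th:Theorem2.2}, and item iv) is the same direct computation, using that the hypothesis $\dom g\subseteq H^{<}_{(0,x_0,-1),0}$ forces $c^{\prime}((x^*,u^*,\alpha),x_0)=\ci x_0,x^*\cd$ on $\dom g$. Where you genuinely diverge is in i) and in the first halves of ii)--iii). For i), the paper proves e-convexity of $\partial_c f(x_0)$ via the product decomposition (\ref{id:subd_and_c_subd}) (closed convex factor times an e-convex factor) and proves e-convexity of $\partial_{c^{\prime}}g(x_0^*,u_0^*,\alpha_0)$ by a three-case separation argument tailored to Definition \ref{defeconv}; your primary argument instead exhibits both sets uniformly as intersections, over the relevant domain, of one open half-space (from $\ci\cdot,u^*\cd<\alpha$) and one closed half-space (from the linearized subgradient inequality), which is cleaner and subsumes the paper's case analysis --- the only point worth making explicit is that a closed half-space is itself an intersection of open ones, so the whole intersection is indeed e-convex. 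For the first claims of ii)--iii), the paper constructs the $c$-elementary (resp.\ $c^{\prime}$-elementary) minorant $\varphi$ touching the function at the reference point, whereas you sandwich $f(x_0)\leq f^{cc^{\prime}}(x_0)=\eco f(x_0)\leq f(x_0)$ directly from Lemma \ref{lemma 5}; both are equally short, and your observation that the lemma forces $f^c(x^*,u^*,\alpha)\in\R$ correctly disposes of the bookkeeping issue you flag. No gaps.
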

\begin{proof}
$i)$ We can assume that $ x_0\in \dom f$ and  $( x_0^*, u_0 ^*, \alpha_0) \in \dom g$.
From (\ref{id:subd_and_c_subd}) and \cite[Th.~2.4.1]{Z2002}, $ \partial f (x_0)$ is convex and $w^*$-closed in $X^*$, which is a locally convex space, hence $\partial f(x_0)$ is e-convex.\\
Now, name  $V:=\left \{ (u^*, \alpha) \in X^* \times \R: \dom f \subseteq H_{u^*, \alpha}^{<} \right \}$. We will show, in first place, that $V$ is e-convex according to Definition \ref{defeconv}. Take a point $(v^*, \beta) \in X^* \times \R$ not belonging to $V$. Then, there exists $x \in \dom f$ such that $\left \langle x, v^* \right \rangle \geq \beta$. Take $(x, -1) \in X \times \R \subset (X^* \times \R)^*$, and we will have that the hyperplane
$$H=\left \{ (y^*, \gamma) \in X^* \times \R:\left \langle -x, y^* \right \rangle - \gamma = \left \langle -x, v^* \right \rangle - \beta \right \}$$
passes through the point  $(v^*, \beta)$, and for all $(u^*, \alpha) \in V$, 
\begin{equation*}
	\left \langle -x, u^* \right \rangle - \alpha >0 \geq  \left \langle -x, v^* \right \rangle - \beta
\end{equation*}
and $V \cap H = \emptyset$. We obtain that $V$ is e-convex and, recalling (\ref{id:subd_and_c_subd}),  $\partial_c f(\bar x)=\partial f(\bar x) \times V$ is e-convex, too \footnote{ In \cite{RVP2011} it is established that $C \subset \R^n$ and $D \subset \R^m$ are e-convex if and only if $C \times D$ is e-convex, and it can be easily extended to the framework of locally convex spaces.}.\\

Again using Definition \ref{defeconv}, take $x \not \in   \partial_{c^{\prime}} g( x_0^*, u_0 ^*, \alpha_0)  $. Then, either $\left \langle x,  u_0^* \right \rangle \geq  \alpha_0$ or $\left \langle x, u_0^* \right \rangle < \alpha_0$ and a point $( x^*,  u ^*, \alpha) \in W$ can be found verifying 
\begin{equation}\label{eq3}
g(x^*,u^*,\alpha)<g( x_0^*, u_0 ^*, \alpha_0) +c^{\prime}((x^*, u^*, \alpha),x)- c^{\prime}(( x_0^*, u_0 ^*, \alpha_0),x).
\end{equation}
In the first case, take $(u_0 ^*,  \alpha_0) \in X^* \times \R$, and we have 
$$\left \langle y,  u_0^* \right \rangle < \alpha_0,$$
for all $y \in  \partial_{c^{\prime}} g( x_0^*, u_0 ^*, \alpha_0) $. The hyperplane $\left \{ z \in X: \left \langle z,  u_0^* \right \rangle =\left \langle x,  u_0^* \right \rangle \right \}$ verifies that it contains the point $x$ and has empty intersection with  $\partial_{c^{\prime}} g( x_0^*, u_0 ^*, \alpha_0) $, allowing us to conclude that this set is e-convex.\\
In the second case, if moreover $\left \langle x,  u^* \right \rangle <  \alpha$, we can take $(x^*-x_0^*, \beta_0) \in X^* \times \R$, where
$$ \beta_0=g(x^*,u^*,\alpha)-g( x_0^*, u_0 ^*, \alpha_0) \in \R.$$
Since (\ref{eq3}) holds, $ \left \langle x,  x^*- x_0^* \right \rangle >\beta_0$ and, for all $y \in  \partial_{c^{\prime}} g( x_0^*, u_0 ^*, \alpha_0) $,
\begin{equation*}
	\left \langle y,  x^*- x_0^* \right \rangle \leq \beta_0.
\end{equation*}
Naming $ \delta_0=\left \langle x,  x^*- x_0^*  \right \rangle$ and taking the hyperplane $\left \{ z \in X: \left \langle z,  x^*- x_0^*  \right \rangle = \delta_0\right \}$, we have
\begin{equation*}
	\left \langle y,  x^*- x_0^* \right \rangle \leq \beta_0< \delta_0,
\end{equation*}
for all  $y \in  \partial_{c^{\prime}} g( x_0^*, u_0 ^*, \alpha_0) $, and $\left \langle x,  x^*- x_0^*  \right \rangle=\delta_0.$ 
Finally, still in the second case, but $\left \langle x,  u^* \right \rangle \geq \alpha$, take  $( u^*, \alpha) \in X^* \times \R$, which verifies that $\left \langle y,  u^* \right \rangle < \alpha$, for all $y \in  \partial_{c^{\prime}} g( x_0^*, u_0 ^*, \alpha_0) $. We conclude that also in the second case  $\partial_{c^{\prime}} g( x_0^*, u_0 ^*, \alpha_0)$ is e-convex.\\

$ii)$ We set $\varphi: X \to\Ramp$ the following $c$-elementary and, consequently, e-convex function
\begin{equation*}
	\varphi (x)=c(x,(x^*,u^*,\alpha))-c( x_0,(x^*,u^*,\alpha)) + f( x_0),
\end{equation*}
where $(x^*,u^*,\alpha) \in \partial_c f(x_0)$.
Then, for all $x \in X$, $\varphi (x) \leq	 f(x)$, and since $\eco f$ is the largest e-convex minorant of $f$ by definition, we have
$$\varphi (x) \leq \eco f(x) \leq f(x), \text{ for all } x \in X,$$
but $\varphi (x_0)= f( x_0)$, hence  ${\eco}f( x_0)=f( x_0)$.
Now, according to Lemma \ref{lemma 5}, $(x^*,u^*,\alpha) \in \partial_c f(x_0)$ if and only if $\left \langle x_0, u^* \right \rangle < \alpha$ and 
\begin{equation*}
	f(x_0)+f^{c} (x^*,u^*, \alpha)= c( x_0,(x^*,u^*, \alpha)).
\end{equation*}
On the other hand, as it is proved in \cite[Prop.~40]{MLVP2011}, $\eco f= f^{cc^{\prime}}$ and then, ${(\eco f)^c}=(f^{cc^{\prime}})^c=(f^c)^{{c^{\prime}}c}=f^c$, due to the fact
that $f^c$ is  e$^{\prime}$-convex and Theorem \ref{th:Theorem2.2} is applied. Hence the above equality reads now
\begin{equation*}
	\eco f( x_0)+(\eco f)^{c} (x^*,u^*, \alpha)= c(x_0,(x^*,u^*, \alpha)),
\end{equation*}
and applying again Lemma \ref{lemma 5}, we have $(x^*,u^*,\alpha) \in \partial_c (\eco f)(x_0)$. Therefore $\partial_c f(x_0) \subseteq \partial_c ({\eco}f)(x_0)$.
The opposite inclusion follows in an analogous way.\\

$iii)$ We set, for each $x \in \partial_{c^{\prime}} g( x_0^*, u_0 ^*, \alpha_0)$, the $c^{\prime}$-elementary and therefore $e^{\prime}$-convex function $\varphi_x^{\prime}\colon W \to \Ramp$,
\begin{equation*}
	\varphi_x^{\prime}( x^*,  u ^*,  \alpha)=c^{\prime}((x^*, u^*, \alpha),x)- c^{\prime}(( x_0^*, u_0 ^*, \alpha_0),x)+g( x_0^*, u_0 ^*, \alpha_0).
\end{equation*}
Then, for all $( x^*,  u ^*,  \alpha) \in W$, $\varphi_x^{\prime}( x^*,  u ^*,  \alpha) \leq g( x^*,  u ^*, \alpha)$, and 
\begin{equation*}
	\varphi_x^{\prime}( x^*,  u ^*,  \alpha) \leq \epco g( x^*,  u ^*,  \alpha) \leq g( x^*,  u ^*, \alpha),
\end{equation*}
because $\varphi_x^{\prime}$ is $e^{\prime}$-convex, but $\varphi_x^{\prime}( x_0^*, u_0 ^*, \alpha_0)=g( x_0^*, u_0 ^*, \alpha_0)$, so $\epco g( x_0^*, u_0 ^*, \alpha_0)= g( x_0^*, u_0 ^*, \alpha_0)$. The second equality follows the same steps than the case of $c$-subdifferentiability in item \emph{ii)}.\\

$iv)$ If $ x_0 \in \partial_{c^{\prime}} g( x_0^*, u_0 ^*, \alpha_0)$, we have  $\left \langle x_0, \ u_0^* \right \rangle < \alpha_0$ and 
\begin{equation}\label{eq2}
	g(x^*,u^*,\alpha) \geq g( x_0^*, u_0 ^*, \alpha_0) +c^{\prime}((x^*, u^*, \alpha),x_0)-\left \langle x_0, x_0^* \right \rangle,
\end{equation}
for all $(x^*,u^*,\alpha) \in \dom g$. Since $\dom g \subseteq H_{(0, x_0, -1)} ^{<}$,  for all $(x^*,u^*,\alpha) \in \dom g$, $\left \langle x_0,  u^* \right \rangle < \alpha$,  then we can rewrite (\ref{eq2}) as
\begin{equation*}
	g(x^*,u^*,\alpha) \geq g( x_0^*, u_0 ^*, \alpha_0) +\left \langle x_0, x^* \right \rangle-\left \langle x_0,  x_0^* \right \rangle,
\end{equation*}
and $( x_0,0,0) \in \partial g( x_0^*, u_0 ^*, \alpha_0)$.
\end{proof}

%
\begin{remark}
Statement $iv)$ in the previous theorem  establishes a relationship between subdifferential and $c^{\prime}$-subdifferential sets, as it occurs in the case of $c$-subdifferentiability; recall  (\ref{id:subd_and_c_subd}). 
\end{remark}

The notion of $\varepsilon$-$c$-subgradient appears firstly in \cite{FVR2012} allowing a characterization of the epigraph of the $c$-conjugate (see \cite [Lem.~9]{FVR2012}). Moreover, it is used to build an alternative formulation for a general regularity condition in evenly convex optimization in \cite [Th.~4.9] {FGV2021}.
\begin{definition}[{\cite[Def. 4]{FVR2012}}]
Let $f:X\en \Ramp$ be a proper function and $\varepsilon \geq 0$. A vector $ (x^*, u^*, \alpha) \in W$ is a \textit{$\varepsilon$-c-subgradient} of $f$ at $x_{0} \in X$ if  $f(x_{0}) \in \R, \langle x_{0}, u^* \rangle < \alpha$ and, for all $x \in X$,
\begin{equation*}
f(x)-f(x_{0}) \geq c(x,(x^*, u^*, \alpha))-c(x_{0}, (x^*, u^*, \alpha))-\varepsilon.
\end{equation*}
The set of all $\varepsilon$-$c$-subgradients of $f$ at $x_{0}$ is denoted by $\partial_{c,\varepsilon}f(x_{0})$ and is called the \textit{$\varepsilon$-$c$-subdifferential set} of $f$ at $x_0$. In the case $f(x_{0}) \notin \R$, it is set $\partial_{c,\varepsilon}f(x_{0})=\emptyset$.
\end{definition}
Relating the notion of $\varepsilon$-$c$-subdifferentiability with the classical $\varepsilon$-subdifferentiability, it is easy to see that, for all $x_0 \in \dom f$, 
\begin{equation}\label{relationepsilon}
\partial_{c,\varepsilon}f(x_{0})=\partial_{\varepsilon} f(x_0) \times \left \{ (u^*, \alpha) \in X^* \times \R: \dom f \subset H_{u^*, \alpha}^{<} \right \}.
\end{equation}
Moreover, we also have that, if $0 \leq \varepsilon_1 \leq \varepsilon_2 < \infty$, then 
\begin{equation*}
	\partial_{c}f(x_{0})=\partial_{c,0}f(x_{0}) \subseteq \partial_{c,\varepsilon_1}f(x_{0}) \subseteq \partial_{c,\varepsilon_2}f(x_{0}),
\end{equation*}
and
\begin{equation*}
	\partial_{c,\varepsilon}f(x_{0})=\bigcap_{\eta > \varepsilon} \partial_{c,\eta}f(x_{0}).
\end{equation*}

In a similar way, we can define the \textit{$\varepsilon$-$c^{\prime}$-subdifferential set} of a function $g\colon W\to \Ramp$ at $(x^*_0,u^*_0,\alpha_0) \in W$.

\begin{theorem}\label{theorem3}
Let $f,g\colon X\to \Ramp$  be proper functions, $ x_0 \in X$ and  $\varepsilon \geq 0$. Then:
\begin{enumerate}
	\item[i)]  $\partial_{c, \varepsilon}f(x_0) \subseteq W$ is e-convex.
	\item[ii)] $( x^*,  u ^*, \alpha) \in \partial_{c, \varepsilon} f(x_0)$ if and only if $\langle x_0,  u^* \rangle < \alpha$ and 
	\begin{equation*}
		f( x_0)+f^c( x^*,  u ^*,  \alpha) \leq c( x_0, ( x^*,  u ^*,  \alpha) )+\varepsilon.
	\end{equation*}
	Moreover, $ x_0 \in   \partial_{c^{\prime}, \varepsilon} f^c( x^*,  u ^*,  \alpha)$.
	\item[iii)] If $x_0 \in \dom f \cap \dom g$, then 
	\begin{equation*}
		\bigcup_{\eta \in [0, \varepsilon]} \left (\ \partial_{c, \eta} f(x_0)+\partial_{c, \varepsilon-\eta} g( x_0) \right )\ \subseteq \partial_{c, \varepsilon} (f+g)(x_0).
	\end{equation*}
\end{enumerate}
\end{theorem}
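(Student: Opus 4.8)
The plan is to handle the three items separately, each one mirroring a result already available. For \emph{i)}, I would start from identity (\ref{relationepsilon}), assuming without loss of generality that $x_0\in\dom f$ (otherwise $\partial_{c,\varepsilon}f(x_0)=\emptyset$, which is e-convex). By (\ref{relationepsilon}) one has $\partial_{c,\varepsilon}f(x_0)=\partial_{\varepsilon}f(x_0)\times V$, where $V=\{(u^*,\alpha)\in X^*\times\R:\dom f\subseteq H_{u^*,\alpha}^{<}\}$ is exactly the set shown to be e-convex in the proof of Theorem \ref{theorem1}\emph{i)}. Since $\partial_{\varepsilon}f(x_0)$ is convex and $w^{*}$-closed it is e-convex, and a product of e-convex sets is e-convex in the lcs setting; hence $\partial_{c,\varepsilon}f(x_0)$ is e-convex.

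For \emph{ii)}, the equivalence is just a rewriting of the definition: $(x^*,u^*,\alpha)\in\partial_{c,\varepsilon}f(x_0)$ means $f(x_0)\in\R$, $\langle x_0,u^*\rangle<\alpha$ and $c(x,(x^*,u^*,\alpha))-f(x)\le c(x_0,(x^*,u^*,\alpha))-f(x_0)+\varepsilon$ for every $x\in X$; as $\langle x_0,u^*\rangle<\alpha$ forces $c(x_0,(x^*,u^*,\alpha))=\langle x_0,x^*\rangle\in\R$, taking the supremum over $x$ yields the stated inequality, and reversing the steps gives the converse, the finiteness of $f^{c}(x^*,u^*,\alpha)$ following from that inequality together with the bound $f^{c}(x^*,u^*,\alpha)\ge\langle x_0,x^*\rangle-f(x_0)$ obtained by evaluating the supremum defining $f^c$ at $x=x_0$. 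This is the $\varepsilon$-counterpart of Lemma \ref{lemma 5}. For the ``moreover'' assertion I would argue as in Proposition \ref{Proposition4_6}: to get $x_0\in\partial_{c^{\prime},\varepsilon}f^{c}(x^*,u^*,\alpha)$ one already has $f^{c}(x^*,u^*,\alpha)\in\R$ and $\langle x_0,u^*\rangle<\alpha$, and it remains to check, for every $(y^*,v^*,\beta)\in W$, that $f^{c}(y^*,v^*,\beta)-f^{c}(x^*,u^*,\alpha)\ge c^{\prime}((y^*,v^*,\beta),x_0)-\langle x_0,x^*\rangle-\varepsilon$; when $c^{\prime}((y^*,v^*,\beta),x_0)\in\R$ this follows by combining $f^{c}(y^*,v^*,\beta)\ge c^{\prime}((y^*,v^*,\beta),x_0)-f(x_0)$ (again evaluating a supremum at $x_0$) with the inequality from the first part, and when $c^{\prime}((y^*,v^*,\beta),x_0)=+\infty$ that same evaluation forces $f^{c}(y^*,v^*,\beta)=+\infty$, so the inequality is trivial.

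For \emph{iii)}, it suffices to prove, for each fixed $\eta\in[0,\varepsilon]$, that $\partial_{c,\eta}f(x_0)+\partial_{c,\varepsilon-\eta}g(x_0)\subseteq\partial_{c,\varepsilon}(f+g)(x_0)$ and then take the union over $\eta$. Fixing $(x_1^*,u_1^*,\alpha_1)\in\partial_{c,\eta}f(x_0)$ and $(x_2^*,u_2^*,\alpha_2)\in\partial_{c,\varepsilon-\eta}g(x_0)$, from $x_0\in\dom f\cap\dom g$ we get $(f+g)(x_0)\in\R$, and adding $\langle x_0,u_1^*\rangle<\alpha_1$ to $\langle x_0,u_2^*\rangle<\alpha_2$ gives $\langle x_0,u_1^*+u_2^*\rangle<\alpha_1+\alpha_2$. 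Adding the two defining inequalities at an arbitrary $x\in X$ and using the subadditivity-type inequality $c(x,(x_1^*,u_1^*,\alpha_1))+c(x,(x_2^*,u_2^*,\alpha_2))\ge c(x,(x_1^*+x_2^*,u_1^*+u_2^*,\alpha_1+\alpha_2))$ together with the equality $c(x_0,(x_1^*,u_1^*,\alpha_1))+c(x_0,(x_2^*,u_2^*,\alpha_2))=c(x_0,(x_1^*+x_2^*,u_1^*+u_2^*,\alpha_1+\alpha_2))$ (valid since $\langle x_0,u_i^*\rangle<\alpha_i$) yields precisely the $\varepsilon$-$c$-subgradient inequality for $f+g$ at $x_0$ with subgradient $(x_1^*+x_2^*,u_1^*+u_2^*,\alpha_1+\alpha_2)$.

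The genuinely routine parts are the rearrangements and supremum manipulations; the one point requiring care is the arithmetic with the value $+\infty$ carried by the couplings $c$ and $c^{\prime}$ — in \emph{ii)} when verifying the $c^{\prime}$-subgradient inequality on the portion of $W$ where $c^{\prime}(\cdot,x_0)=+\infty$, and in \emph{iii)} when adding the two subgradient inequalities so as never to produce an indeterminate $(+\infty)-(+\infty)$. In both cases one uses that $c$ and $c^{\prime}$ never take the value $-\infty$, so that a $+\infty$ on the right-hand side forces the relevant function value to equal $+\infty$ and the inequality to be checked becomes trivial; this is the anticipated main obstacle, and it is only a matter of bookkeeping.
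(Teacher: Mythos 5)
Your proposal is correct and follows essentially the same route as the paper: item \emph{i)} by reduction to the product decomposition (\ref{relationepsilon}) and the argument of Theorem \ref{theorem1} \emph{i)}, item \emph{ii)} by taking the supremum in the defining inequality and then transferring it to $f^c$ (the paper phrases the ``moreover'' via the analogous characterization of $\partial_{c^{\prime},\varepsilon}f^c$ together with $f^{cc^{\prime}}\leq f$, which is the same computation you carry out directly), and item \emph{iii)} by adding the two subgradient inequalities and using additivity of the coupling at $x_0$ and subadditivity elsewhere, exactly the mechanism the paper uses after passing through the conjugates. Your explicit bookkeeping of the $+\infty$ cases is a welcome addition rather than a deviation.
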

%
\begin{proof}
$i)$ Similar to the proof of item $i)$ in  Theorem \ref{theorem1}.\\

$ii)$ $( x^*,  u ^*, \alpha) \in \partial_{c, \varepsilon} f( x_0)$ if and only if $ \langle x_0,  u^* \rangle <  \alpha \text{ and, for all } x \in X$,
\begin{equation*}
	f(x_0)+ c(x,( x^*,  u ^*, \alpha))- f(x) \leq c( x_0, ( x^*,  u ^*, \alpha))+\varepsilon,
\end{equation*}
equivalently, $\langle x_0, u^* \rangle < \alpha$ and
\begin{equation*}
	f(x_0)+f^c ( x^*, u^*, \alpha) \leq c( x_0, ( x^*, u ^*, \alpha))+\varepsilon.
\end{equation*}
Analogously, $x_0 \in   \partial_{c^{\prime}, \varepsilon} f^c( x^*, u ^*, \alpha)$ if and only  if 
\begin{equation*}
	\langle  x_0,  u^* \rangle <  \alpha \text{ and }  f^c ( x^*,  u^*,  \alpha) +f^{cc^{\prime}} (x_0) \leq c^{\prime}( ( x^*, u ^*, \alpha),x_0)+\varepsilon,
\end{equation*}
and taking into account that, according to Theorem \ref{th:Theorem2.2}, $f( x_0) \geq f^{cc^{\prime}} ( x_0)$, we obtain that, if $( x^*,  u^*,  \alpha) \in \partial_{c, \varepsilon} f(x_0)$, then  $ x_0 \in   \partial_{c^{\prime}, \varepsilon} f^c (x^*,  u^*,  \alpha).$\\

$iii)$ Let $\eta \in [0, \varepsilon]$. Then $( x_1 ^*, u_1 ^*, \alpha_1 ) \in \partial_{c, \eta} f(x_0)$ and $(x_2 ^*, u_2 ^*, \alpha_2 ) \in \partial_{c, \varepsilon-\eta} g(x_0)$ if and only if $  \langle x_0,  u_1^* \rangle < \alpha_1$,  $\langle x_0, u_2^* \rangle < \alpha_2$,
\begin{align*}
	f( x_0)+f^c ( x_1 ^*, u_1 ^*, \alpha_1 )&\leq c( x_0,( x_1 ^*, u_1 ^*, \alpha_1 ))+\eta, \text{ and }\\[0.15cm]
	g( x_0)+g^c (x_2 ^*, u_2 ^*, \alpha_2 )  &\leq c( x_0, (x_2 ^*, u_2 ^*, \alpha_2 ))+\varepsilon -\eta.
\end{align*}
Then  $\langle  x_0, u_1^* + u_2^* \rangle < \alpha_1 + \alpha_2$ and,  adding both inequalities and considering the additivity property of the coupling function, which can be applied in this case, we obtain
\begin{equation*}
	(f+g)( x_0)+f^c ( x_1 ^*, u_1 ^*, \alpha_1 )+g^c (x_2 ^*, u_2 ^*, \alpha_2 )  \leq c( x_0, (x_1 ^*+ x_2^*,  u_1 ^*+  u_2 ^*, \alpha_1 +  \alpha_2 ))+\varepsilon.
\end{equation*}
However, it yields
\begin{align*}
	f^c ( x_1 ^*, u_1 ^*, \alpha_1 ) &+ g^c (x_2 ^*, u_2 ^*, \alpha_2 ) \\[0.15cm]
	&\geq \sup_{x \in X} \left \{ c(x, ( x_1 ^*, u_1 ^*, \alpha_1 ))+ c(x, (x_2 ^*, u_2 ^*, \alpha_2 ) )-(f+g) (x) \right \}\\[0.15cm]
	&\geq \sup_{x \in X} \left \{c(x, (x_1 ^*+ x_2^*,   u_1 ^*+ u_2 ^*, \alpha_1 + \alpha_2 ))-(f+g)( x) \right \}\\[0.15cm]
	&= (f+g)^c( x_1 ^*+  x_2^*,  u_1 ^*+  u_2 ^*, \alpha_1 +  \alpha_2 ),
\end{align*}
thus, we conclude
\begin{align*}
(f+g)( x_0)&+(f+g)^c(x_1 ^*+ x_2^*,  u_1 ^*+  u_2 ^*,  \alpha_1 + \alpha_2 )\\[0.15cm]
 &\leq c(x_0, ( x_1 ^*+ x_2^*,  u_1 ^*+ u_2 ^*,  \alpha_1 + \alpha_2 ))+\varepsilon,
\end{align*}
and, consequently, $(x_1 ^*+ x_2^*, u_1 ^*+  u_2 ^*, \alpha_1 + \alpha_2 ) \in \partial_{c, \varepsilon} (f+g)(x_0)$.
\end{proof}

\section{Further properties of the  $c$-subdifferential}
\label{sec:4}

We continue developing additional properties of the $c$-subdifferential extending them from \cite{Z2002}. Next proposition establishes the relationship between the $c$-subdifferential of a proper function and the domain of its $c$-conjugate.
\begin{proposition}\label{prop:domfc_vs_csubdiff} 
Let $f:X\to\Ramp$ be a proper function and $x_0\in \dom f$. Then $\partial_c f(x_0)\subseteq \dom f^c$.
\end{proposition}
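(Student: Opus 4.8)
The plan is to use the characterization of $c$-subgradients via the $c$-conjugate provided by Lemma \ref{lemma 5}, together with the fact that $x_0\in\dom f$ and $f$ is proper force $f(x_0)$ to be a real number. Recall that $\dom f^c=\{(x^*,u^*,\alpha)\in W: f^c(x^*,u^*,\alpha)<+\infty\}$, so it suffices to show that $f^c(x^*,u^*,\alpha)<+\infty$ for every $(x^*,u^*,\alpha)\in\partial_c f(x_0)$.

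First I would fix $(x^*,u^*,\alpha)\in\partial_c f(x_0)$. By Definition \ref{def:c-subgradient} this is only possible if $f(x_0)\in\R$ (which is anyway guaranteed here, since $x_0\in\dom f$ gives $f(x_0)<+\infty$ and properness of $f$ gives $f(x_0)>-\infty$) and $\langle x_0,u^*\rangle<\alpha$. The latter strict inequality is the key point: it ensures, by the very definition \eqref{equation: def c and c prime} of the coupling function, that $c(x_0,(x^*,u^*,\alpha))=\langle x_0,x^*\rangle\in\R$, i.e.\ the coupling value at $x_0$ is finite rather than $+\infty$.

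Now I would invoke Lemma \ref{lemma 5}, which tells us that membership $(x^*,u^*,\alpha)\in\partial_c f(x_0)$ is equivalent to $\langle x_0,u^*\rangle<\alpha$ together with the exact equality
\begin{equation*}
	f(x_0)+f^c(x^*,u^*,\alpha)=c(x_0,(x^*,u^*,\alpha)).
\end{equation*}
Rearranging, $f^c(x^*,u^*,\alpha)=c(x_0,(x^*,u^*,\alpha))-f(x_0)=\langle x_0,x^*\rangle-f(x_0)$, which is a real number since both terms on the right-hand side are finite. In particular $f^c(x^*,u^*,\alpha)<+\infty$, hence $(x^*,u^*,\alpha)\in\dom f^c$, and the inclusion $\partial_c f(x_0)\subseteq\dom f^c$ follows.

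There is really no serious obstacle here; the only thing to be a little careful about is that one genuinely needs the strict inequality $\langle x_0,u^*\rangle<\alpha$ coming from the definition of $c$-subgradient in order to evaluate the coupling function $c(x_0,\cdot)$ to a finite value — without it the right-hand side of the Lemma \ref{lemma 5} equality could be $+\infty$ and the argument would collapse. Alternatively, and just as quickly, one can avoid citing Lemma \ref{lemma 5} and argue directly from the subgradient inequality: for all $x\in X$, $c(x,(x^*,u^*,\alpha))-f(x)\leq c(x_0,(x^*,u^*,\alpha))-f(x_0)=\langle x_0,x^*\rangle-f(x_0)$, and taking the supremum over $x\in X$ yields $f^c(x^*,u^*,\alpha)\leq\langle x_0,x^*\rangle-f(x_0)<+\infty$.
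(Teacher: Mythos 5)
Your proof is correct and follows essentially the same route as the paper: the paper argues directly from the subgradient inequality, rearranges, and takes the supremum to get $f^c(x^*,u^*,\alpha)\leq c(x_0,(x^*,u^*,\alpha))-f(x_0)<+\infty$, using exactly the finiteness of the coupling value guaranteed by $\langle x_0,u^*\rangle<\alpha$ — which is the ``alternative'' argument you sketch in your last paragraph, and your main route via Lemma \ref{lemma 5} is just the same computation packaged into that lemma.
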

%
\begin{proof}
Take $(x^*,u^*,\alpha)\in\partial_c f(x_0)$. Then, by definition it holds
\begin{equation*}
	f(x)-f(x_0)\geq c(x,(x^*,u^*,\alpha)) - c(x_0,(x^*,u^*,\alpha)),~ \forall x\in X,
\end{equation*}
together with $\ci x_0,u^*\cd <\alpha$. Rearranging, this means that
\begin{equation*}
	c(x_0,(x^*,u^*,\alpha)) - f(x_0) \geq c(x,(x^*,u^*,\alpha))-f(x),~\forall x\in X,
\end{equation*}
with $\ci x_0,u^*\cd<\alpha$, which, after taking supremum on the right-hand-side, can be rewritten as
\begin{equation*}
	c(x_0,(x^*,u^*,\alpha))-f(x_0) \geq f^c(x^*,u^*,\alpha).
\end{equation*}
Since $\ci x_0,u^*\cd<\alpha$, the coupling function is finite and due to the fact that $x_0\in\dom f$ by hypothesis, we conclude that $(x^*,u^*,\alpha)\in\dom f^c$.\\
\end{proof}

%
\begin{remark}
In a similar way, it can be proved that $\partial_{c^{\prime}} g( x^*, u ^*, \alpha) \subseteq \dom g^{c^{\prime}}$, for a proper function $g\colon W\to \Ramp$ and $( x^*,  u ^*,\alpha) \in \dom g$.
\end{remark}
According to Lemma \ref{lemma 5}, for any proper function $f$ it holds that a point $(x^*,u^*,\alpha) \in \dom f^c$ belongs to $\partial_c f(x_0)$ if and only if $f^c(x^*,u^*,\alpha)=c(x_0,(x^*,u^*,\alpha))-f(x_0)$, which means that $\sup_X\left \{ c(x,(x^*,u^*,\alpha))-f(x) \right \}$ must be attained at $x_0$. This is not necessarily true, so the inclusion in Proposition \ref{prop:domfc_vs_csubdiff} may be strict.
\begin{example}
Let $f \colon \R \to \Ramp$,
\begin{equation*} 
	f(x) =\left\{
	\begin{array}{ll}
		x^2,&  \text{ if } x >0,\\[0.2cm]
		+\infty,&  \text{ otherwise.}%
	\end{array}
	\right.
\end{equation*}
Let us observe that $(3,0,1) \in \dom f^c$, since $f^c(3,0,1)=\sup_{x>0}\{3x-x^2\}={9 \over 4}$, and it is attained at $x={3 \over 2}$, hence $(3,0,1) \notin  \partial_c f(x_0)$ for all $x_0 \neq {3 \over 2}$. Anyway, a matter of computation shows
$$\dom f^c=\R \times  (\ \R_{-} \times \R_{+} \backslash \{ (0,0) \}),$$
whereas, for all $x_0>0$,
$$\partial_c f(x_0)=\{2x_0\} \times (\ \R_{-} \times \R_{+} \backslash \{ (0,0) \}).$$
\end{example}

 For any proper function $f:X\to\Ramp$, any point $x_0\in\dom f$   and $(x^*,u^*,\alpha)\in\partial_c f(x_0)$, we can infer that, since for all $x\in X$,
\begin{equation*}
	f(x)\geq f(x_0) +c(x,(x^*,u^*,\alpha)) - c(x_0,(x^*,u^*,\alpha)),
\end{equation*}
then,
\begin{equation*}
	f(x)\geq f(x_0) +\sup_{(x^*,u^*,\alpha)\in\partial_c f(x_0)} \left\{ c(x,(x^*,u^*,\alpha)) - c(x_0,(x^*,u^*,\alpha))\right\},
\end{equation*}
 for all $x \in X$.
Next proposition shows that equality between  $\dom f^c$ and $\partial_c f(x_0)$ is a sufficient condition for the above inequality to be an equality, if $f$ is e-convex.

\begin{proposition}\label{equality_f_fc_and_c's} Let $f:X\to\Ramp$ be a proper e-convex function with $x_0\in\dom f$ and $\partial_c f(x_0)= \dom f^c$. Then, for all $x \in X$,
\begin{equation*}
	f(x)= f(x_0) +\sup_{(x^*,u^*,\alpha)\in\partial_c f(x_0)} \left\{ c(x,(x^*,u^*,\alpha)) - c(x_0,(x^*,u^*,\alpha))\right\}.
\end{equation*}
\end{proposition}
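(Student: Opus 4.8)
The plan is to exploit that an e-convex function equals its $c$-biconjugate. The inequality ``$\geq$'' has just been recorded before the statement (for an arbitrary proper $f$ and $x_0\in\dom f$), so only ``$\leq$'' is missing; in fact the computation below delivers the equality in one stroke. Write $F(x):=f(x_0)+\sup_{(x^{\ast},u^{\ast},\alpha)\in\partial_c f(x_0)}\{c(x,(x^{\ast},u^{\ast},\alpha))-c(x_0,(x^{\ast},u^{\ast},\alpha))\}$ for the right-hand side.

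First I would note that $\dom f^c\neq\emptyset$: otherwise $f^c\equiv+\infty$, hence $f^{cc^{\prime}}\equiv-\infty$ by the sign conventions, and since $f$ is e-convex Theorem \ref{th:Theorem2.2} gives $f=f^{cc^{\prime}}$, contradicting that $f$ is proper (equivalently, the hypothesis forces $\partial_c f(x_0)\neq\emptyset$). Next, since $f$ is e-convex, Theorem \ref{th:Theorem2.2}\,(iii) yields $f=f^{cc^{\prime}}$, i.e.
\[
f(x)=\sup_{(x^{\ast},u^{\ast},\alpha)\in W}\{c^{\prime}((x^{\ast},u^{\ast},\alpha),x)-f^c(x^{\ast},u^{\ast},\alpha)\},\qquad\text{for all }x\in X.
\]
Here the supremum may be restricted to $\dom f^c$: for $(x^{\ast},u^{\ast},\alpha)\notin\dom f^c$ we have $f^c(x^{\ast},u^{\ast},\alpha)=+\infty$, so by the sign conventions (and the usual arithmetic of extended reals) the corresponding term equals $-\infty$ irrespective of whether $c^{\prime}$ is finite there, and $\dom f^c\neq\emptyset$ makes the restriction harmless.

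Now bring in the hypothesis $\dom f^c=\partial_c f(x_0)$ together with Lemma \ref{lemma 5}: for every $(x^{\ast},u^{\ast},\alpha)\in\partial_c f(x_0)$ one has $\ci x_0,u^{\ast}\cd<\alpha$ and $f(x_0)+f^c(x^{\ast},u^{\ast},\alpha)=c(x_0,(x^{\ast},u^{\ast},\alpha))$. Since $\ci x_0,u^{\ast}\cd<\alpha$ we get $c(x_0,(x^{\ast},u^{\ast},\alpha))=\ci x_0,x^{\ast}\cd\in\R$, and $f(x_0)\in\R$ because $x_0\in\dom f$ and $f$ is proper; hence $f^c(x^{\ast},u^{\ast},\alpha)=c(x_0,(x^{\ast},u^{\ast},\alpha))-f(x_0)\in\R$. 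Substituting this into the restricted supremum, using $c^{\prime}((x^{\ast},u^{\ast},\alpha),x)=c(x,(x^{\ast},u^{\ast},\alpha))$ and pulling the finite constant $f(x_0)$ out of the supremum, gives exactly $f(x)=F(x)$, as desired.

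The only delicate point — and the spot where an argument of this kind can slip — is the extended-real bookkeeping: one must check that the discarded terms (those with $(x^{\ast},u^{\ast},\alpha)\notin\dom f^c$) genuinely evaluate to $-\infty$ under the stated conventions, that every value $f^c(x^{\ast},u^{\ast},\alpha)$ surviving in the supremum as well as $f(x_0)$ are real — so the substitution and the extraction of the constant $f(x_0)$ remain valid even when the supremum itself is $+\infty$ — and that $\dom f^c$ is nonempty so the restriction of the supremum costs nothing. Past this bookkeeping there is no genuine obstacle: the proof is Theorem \ref{th:Theorem2.2}\,(iii) together with Lemma \ref{lemma 5}.
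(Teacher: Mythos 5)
Your proposal is correct and follows essentially the same route as the paper: both arguments combine Lemma \ref{lemma 5} (to replace $f^c$ by $c(x_0,\cdot)-f(x_0)$ on $\partial_c f(x_0)=\dom f^c$) with Theorem \ref{th:Theorem2.2}\,(iii) (to identify the resulting supremum with $f^{cc^{\prime}}(x)=f(x)$), merely read in the opposite order. Your extra care about $\dom f^c\neq\emptyset$ and the restriction of the supremum to $\dom f^c$ makes explicit a bookkeeping step the paper leaves implicit.
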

%
\begin{proof}
Due to Lemma \ref{lemma 5}, for any $(x^*,u^*,\alpha)\in\partial_c f(x_0)$ we have that
\begin{equation*}
	 f(x_0)+f^c(x^*,u^*,\alpha) = c(x_0,(x^*,u^*,\alpha)),
\end{equation*}
so multiplying by $-1$ and adding $c(x,(x^*,u^*,\alpha))$ we get
\begin{equation*}
	-f(x_0)+c(x,(x^*,u^*,\alpha)) -f^c(x^*,u^*,\alpha) = c(x,(x^*,u^*,\alpha)) - c(x_0,(x^*,u^*,\alpha)),
\end{equation*}
being this equality true for all $x\in X$ and $(x^*,u^*,\alpha)\in\partial_c f(x_0)$. Taking supremum over $\dom f^c=\partial_c f(x_0)$ in both sides of the previous equality, we have, for all $x \in X$,
\begin{equation*}
	-f(x_0) + f^{cc^\prime}(x) =\sup_{(x^*,u^*,\alpha)\in\partial_c f(x_0)} \left\{ c(x,(x^*,u^*,\alpha)) - c(x_0,(x^*,u^*,\alpha)) \right\}.
\end{equation*}
Since $f$ is e-convex by hypothesis, according to Theorem \ref{th:Theorem2.2}, $f=f^{cc^\prime}$, and the proof ends.
\end{proof}

The goal now is to relate the $\varepsilon$-directional derivative with the $\varepsilon$-$c$-subdifferential of $f$. We recall the definition of the $\varepsilon$-directional derivative of a function $f$ at a point $x$ along the direction $u$, i.e., 
\begin{equation*}
	f_\varepsilon^\prime(x,u) := \inf_{t>0} \frac{f(x+tu)-f(x)+\varepsilon}{t},
\end{equation*}
see \cite[Th.~2.1.14]{Z2002}. Observe that \cite[Th.~2.4.4]{Z2002} states the relationship between $\partial f_\varepsilon^\prime (x_0,\cdot)(0) $ and $\partial_\varepsilon f(x_0)$ for a proper function and $ x_0 \in \dom f$. The following theorem deals with this relation when $c$-subdifferentiability is used. First, we recall the definition of the normal cone of a convex set $C\subset X$ at a point $ x_0 \in C$, 
\begin{equation*}
	\mathcal{N} (C,x_0) = \left\{ u^*\in X^*\,:\, \ci x-x_0,u^*\cd \leq 0,~\text{ for all } x\in C\right\}.
\end{equation*}
\begin{theorem}\label{th:Equality_subdif_f_and_ddf} 
Let $f:X\to\Ramp$ be a proper function, $x_0\in\dom f$ and $\varepsilon \geq 0$. Then
\begin{equation*}
	\partial_cf_\varepsilon^\prime (x_0,\cdot)(0)\cap (X^*\times\left\{ (u^*,\alpha)\,:\,\ci x_0, u^*\cd<\alpha\right\}) = \partial_{c,\varepsilon}f(x_0)\cap (X^*\times \mathcal{N}(\dom f,x_0)\times \R_{++}).
\end{equation*}
\end{theorem}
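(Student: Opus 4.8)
The strategy is to reduce everything to the known classical identity \cite[Th.~2.4.4]{Z2002}, namely $\partial f_\varepsilon^\prime(x_0,\cdot)(0) = \partial_\varepsilon f(x_0)$, by peeling off the "directional" component $(x^*)$ from the "half-space" component $(u^*,\alpha)$ using the product decomposition \eqref{relationepsilon}. First I would record that, since $f_\varepsilon^\prime(x_0,\cdot)$ is a function on $X$ with $f_\varepsilon^\prime(x_0,0)\le\varepsilon/t\to 0$ as $t\to\infty$ and (when $\varepsilon>0$ or $x_0\in\dom f$) $0\in\dom f_\varepsilon^\prime(x_0,\cdot)$, formula \eqref{id:subd_and_c_subd} applied to $g:=f_\varepsilon^\prime(x_0,\cdot)$ at the point $0$ gives
\begin{equation*}
	\partial_c g(0) = \partial g(0)\times\left\{(u^*,\alpha)\in X^*\times\R\,:\,\dom g\subseteq H^{<}_{u^*,\alpha}\right\}.
\end{equation*}
Intersecting with $X^*\times\{(u^*,\alpha):\ci x_0,u^*\cd<\alpha\}$ and noting $g(0)\le\varepsilon$, the left-hand side of the theorem becomes $\partial g(0)\times V_1$, where $V_1$ collects those $(u^*,\alpha)$ with $\dom g\subseteq H^{<}_{u^*,\alpha}$ \emph{and} $\ci x_0,u^*\cd<\alpha$. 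Similarly, by \eqref{relationepsilon} the right-hand side is $\partial_\varepsilon f(x_0)\times V_2$, where $V_2=\{(u^*,\alpha):\dom f\subseteq H^{<}_{u^*,\alpha}\}\cap(X^*\times\mathcal N(\dom f,x_0)\times\R_{++})$.

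Given \cite[Th.~2.4.4]{Z2002}, which yields $\partial g(0)=\partial f_\varepsilon^\prime(x_0,\cdot)(0)=\partial_\varepsilon f(x_0)$, the proof then comes down to the set equality $V_1=V_2$. This is the heart of the argument. For $V_2\subseteq V_1$: if $\dom f\subseteq H^{<}_{u^*,\alpha}$, $u^*\in\mathcal N(\dom f,x_0)$ and $\alpha>0$, then for any $u\in X$ with $x_0+tu\in\dom f$ for some small $t>0$ one has $\ci x_0+tu,u^*\cd<\alpha$, i.e. $t\ci u,u^*\cd<\alpha-\ci x_0,u^*\cd$, and dividing appropriately one shows $\ci u,u^*\cd<\alpha/t$ for all such $u,t$; since $\dom g$ consists precisely of directions $u$ for which $f(x_0+tu)$ is eventually finite, this forces $\dom g\subseteq H^{<}_{u^*,\alpha}$. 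The condition $\ci x_0,u^*\cd<\alpha$ comes for free from $x_0\in\dom f\subseteq H^{<}_{u^*,\alpha}$. For the reverse inclusion $V_1\subseteq V_2$: from $0\in\dom g$ and $\dom g\subseteq H^{<}_{u^*,\alpha}$ we get $\alpha>0$ directly. To see $u^*\in\mathcal N(\dom f,x_0)$: take $x\in\dom f$ and set $u=x-x_0$; then $x_0+1\cdot u=x\in\dom f$, so $u\in\dom g$, hence $\ci u,u^*\cd<\alpha$; but we also need $\le 0$, which we obtain by scaling — for every $\lambda>0$, $\lambda u\in\dom g$ as well (since $x_0+t(\lambda u)\in\dom f$ for $t=1/\lambda$ if $\lambda\le 1$, and more generally the domain of $g$ is a cone in the relevant sense, or one rescales $t$), so $\lambda\ci u,u^*\cd<\alpha$ for all $\lambda>0$, forcing $\ci u,u^*\cd\le 0$. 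Finally, $\dom f\subseteq H^{<}_{u^*,\alpha}$ follows because for $x\in\dom f$ the direction $u=x-x_0$ lies in $\dom g$, so $\ci u,u^*\cd<\alpha$, i.e. $\ci x,u^*\cd<\alpha+\ci x_0,u^*\cd$; combining with $u^*\in\mathcal N(\dom f,x_0)$, which gives $\ci x,u^*\cd\le\ci x_0,u^*\cd$, and with $\alpha>0$, one deduces $\ci x,u^*\cd<\alpha$ provided $\ci x_0,u^*\cd<\alpha$, which holds since it is part of the defining condition of $V_1$.

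\textbf{Main obstacle.} The delicate point is pinning down exactly what $\dom f_\varepsilon^\prime(x_0,\cdot)$ is and how scaling of directions interacts with the half-space conditions: the domain of the $\varepsilon$-directional derivative is the cone of feasible directions $\{u: \exists t>0,\ f(x_0+tu)<+\infty\}=\cone(\dom f-x_0)$ (a cone through the origin, not necessarily convex a priori, but here $f$ need not be convex), and one must be careful that "$\dom g\subseteq H^{<}_{u^*,\alpha}$" with $g$'s domain being a cone already forces $\ci u,u^*\cd\le 0$ on that cone plus $\alpha>0$ — this is what makes the normal-cone description emerge. Getting the strict-versus-nonstrict inequalities to line up (strict on $H^{<}$, nonstrict in $\mathcal N$) across the rescaling argument is the one place where care is genuinely needed; everything else is bookkeeping on top of \cite[Th.~2.4.4]{Z2002} and \eqref{id:subd_and_c_subd}.
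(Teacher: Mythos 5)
Your proof is correct, but it takes a genuinely different route from the paper's. The paper argues directly at the level of the coupling function: it takes an element of one side, unfolds the definition of $f_\varepsilon^\prime$, substitutes $x=x_0+tu$, and manipulates the infimum over $t>0$ (computing a supremum $\sup_{t>0}h(t)$ in the reverse inclusion) until it lands in the other side. You instead factor both sides as products --- the left-hand side via (\ref{id:subd_and_c_subd}) applied to $g:=f_\varepsilon^\prime(x_0,\cdot)$ at $0$, the right-hand side via (\ref{relationepsilon}) --- then quote the classical identity $\partial f_\varepsilon^\prime(x_0,\cdot)(0)=\partial_\varepsilon f(x_0)$ for the first factors and reduce the theorem to the set identity $V_1=V_2$ for the second factors, using that $\dom g=\bigcup_{\lambda>0}\lambda(\dom f-x_0)$ is a cone containing $0$. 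This is more modular than the paper's computation and makes it transparent why $\mathcal{N}(\dom f,x_0)$ and $\R_{++}$ appear in the statement: they are exactly the condition $\dom g\subseteq H_{u^*,\alpha}^{<}$ transported through the coning operation. Two small points should be tightened. First, in $V_2\subseteq V_1$ your displayed chain derives $\ci u,u^*\cd<\alpha/t$ from $\dom f\subseteq H_{u^*,\alpha}^{<}$, which for $t<1$ does not by itself yield $\ci u,u^*\cd<\alpha$; the clean step uses the other hypothesis: if $u\in\dom g$ then $x_0+tu\in\dom f$ for some $t>0$, so $t\ci u,u^*\cd=\ci (x_0+tu)-x_0,u^*\cd\leq 0$ because $u^*\in\mathcal{N}(\dom f,x_0)$, whence $\ci u,u^*\cd\leq 0<\alpha$. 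Second, (\ref{id:subd_and_c_subd}) is stated for proper functions, whereas $g=f_\varepsilon^\prime(x_0,\cdot)$ may take the value $-\infty$ when $f$ is not convex; you should note that in that case $\partial g(0)$, $\partial_c g(0)$ and $\partial_\varepsilon f(x_0)$ are all empty, so the decomposition and the asserted equality hold trivially. With these two remarks inserted, the argument is complete.
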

%
\begin{proof}
Take $(x^*,u^*,\alpha)\in\partial_cf_\varepsilon^\prime(x_0,\cdot)(0)$ such that $\ci x_0, u^*\cd<\alpha$. Then $\alpha>0$ and
\begin{equation*}
	f_\varepsilon^\prime(x_0,u) - f_\varepsilon^\prime(x_0,0) \geq c(u,(x^*,u^*,\alpha)),~\text{ for all }u\in X.
\end{equation*}
By the definition of $f_\varepsilon^\prime$,
\begin{equation*}
	\frac{f(x_0+tu)-f(x_0)+\varepsilon}{t} \geq c(u,(x^*,u^*,\alpha)),~\text{for all } u\in X,t>0.
\end{equation*}
Let $x:=x_0+tu$. Hence, for all $x\in X$ and $t>0$ it holds
\begin{equation}\label{eq:1_lemma_equality_subdif_f_and_ddf}
	\frac{f(x)-f(x_0)+\varepsilon}{t} \geq c\left(\frac{x-x_0}{t},(x^*,u^*,\alpha)\right).
\end{equation}
In particular, for $t=1$, we get
\begin{equation*}
	f(x)-f(x_0)\geq c(x-x_0,(x^*,u^*,\alpha))-\varepsilon,~\text{ for all } x\in X,
\end{equation*}
then, due to the subadditivity of the coupling function
\begin{equation*}
	f(x)-f(x_0)\geq c(x,(x^*,u^*,\alpha)) - c(x_0,(x^*,u^*,\alpha))-\varepsilon,~\text{ for all } x\in X.
\end{equation*}
Since $x_0\in\dom f$ and $\ci x_0,u^*\cd<\alpha$, we have that $(x^*,u^*,\alpha)\in\partial_{c,\varepsilon} f(x_0)$. Having in mind that $\alpha>0$, from (\ref{eq:1_lemma_equality_subdif_f_and_ddf}), we have, for all $x\in\dom f$ and $t>0$
\begin{equation*}
	\frac{x-x_0}{t}\in H_{u^*,\alpha}^<,
\end{equation*} 
so $\ci x-x_0,u^*\cd< \alpha t$ for all $t>0$, which means that $\ci x-x_0,u^*\cd\leq 0$ for all $x\in\dom f$ and we get that $u^*\in \mathcal{N}(\dom f,x_0)$.\\

For the reverse inclusion, take $(x^*,u^*,\alpha) \in\partial_{c,\varepsilon} f(x_0)$ with $\alpha>0$ and $u^*\in \mathcal{N}(\dom f, x_0)$. We will prove that
\begin{equation*}
	f_\varepsilon^\prime(x_0,u) -f_\varepsilon^\prime(x_0,0) \geq c(u,(x^*,u^*,\alpha)) - c(0,(x^*,u^*,\alpha)),
\end{equation*}
for all $u\in X$, or, equivalently,
\begin{equation*}
	f_\varepsilon^\prime(x_0,u) \geq c(u,(x^*,u^*,\alpha)).
\end{equation*}
By hypothesis, 
$\ci x_0,u^*\cd<\alpha$ and
\begin{equation*}
	f(x)-f(x_0)\geq c(x,(x^*,u^*,\alpha)) - c(x_0,(x^*,u^*,\alpha))-\varepsilon,~\text{ for all } x\in X.
\end{equation*}
Take $x=x_0+tu$, for all $u \in X$ and $t>0$, then
\begin{equation*}
	f(x_0+tu)-f(x_0)\geq c(x_0+tu,(x^*,u^*,\alpha))-c(x_0,(x^*,u^*,\alpha))-\varepsilon,
\end{equation*}
which gives us
\begin{equation*}
	\frac{f(x_0+tu)-f(x_0)+\varepsilon}{t} \geq \frac{1}{t}c(x_0+tu,(x^*,u^*,\alpha)) - \frac{1}{t}c(x_0,(x^*,u^*,\alpha)) \geq \frac{1}{t}c(tu,(x^*,u^*,\alpha)),
\end{equation*}
for all $u \in X$ and $t>0$. Hence, for each $u\in X$,
\begin{equation*}
	\inf_{t>0} \frac{f(x_0+tu)-f(x_0)+\varepsilon}{t} \geq \sup_{t>0} h(t),
\end{equation*}
where
\begin{equation*}
	h(t)=-\frac{1}{t}c(-tu,(x^*,u^*,\alpha))
	=
	\left\{
	\begin{aligned}
		\ci u,x^*\cd & ~ \text{ if } \ci -tu,u^*\cd <\alpha,\\
		-\infty & ~ \text{ otherwise}.
	\end{aligned}
	\right.
\end{equation*}
In case $\ci u,u^*\cd \geq 0$, for all $t>0$ we will have $\ci -tu,u^*\cd\leq 0<\alpha$. If $\ci u,u^*\cd<0$, we can take $t$ small enough, since $t\downarrow 0^+$, such that $\ci -tu,u^*\cd<\alpha$. Hence,
\begin{equation*}
	\sup_{t>0} h(t) = \ci u,x^*\cd 
\end{equation*}
and
\begin{equation}\label{eq:2_lemma_equality_subdif_f_and_ddf}
	f_\varepsilon^\prime(x_0,u)\geq \ci u,x^*\cd. 
\end{equation}
Taking into account that $u^*\in\mathcal{N}(\dom f,x_0)$, for all $y\in\dom f$ it holds $\ci y-x_0,u^*\cd\leq 0<\alpha$. Then, for all $t>0$ we get $\ci t(y-x_0),u^*\cd<\alpha$ and
\begin{equation*}
	\cone(\dom f-x_0)\subset H_{u^*,\alpha}^<
\end{equation*}
which, according to \cite [Th.2.1.14]{Z2002}, means that $\dom f_\varepsilon^\prime(x_0,\cdot)\subset H_{u^*,\alpha}^<$ and, hence, in (\ref{eq:2_lemma_equality_subdif_f_and_ddf}) we can write
\begin{equation*}
	f_\varepsilon^\prime(x_0,u)\geq c(u,(x^*,u^*,\alpha)), ~\text{ for all } u\in X,
\end{equation*}
concluding the proof.
\end{proof}

Due to Theorem \ref{th:Equality_subdif_f_and_ddf}, we pursue the counterpart of \cite[Th.~2.4.11]{Z2002}, which establishes that if $f$ is proper convex and lsc, $x_0\in\dom f$ and $\varepsilon>0$, then
\begin{equation*}
	f_\varepsilon^\prime(x_0,u) = \sup\left\{ \ci u,x^*\cd\,:\,u^*\in\partial_\varepsilon f(x_0)\right\}, ~\text{for all } u\in X.
\end{equation*}
Next result comes immediately from Theorem \ref{th:Equality_subdif_f_and_ddf}.
\begin{corollary}\label{cor:Equality_subdif_f_and_ddf}
Let $f:X\to\Ramp$ be a proper function with $x_0\in\dom f$ and $\varepsilon>0$. Then, for all $u\in X$ it holds
\begin{equation*}
	f_\varepsilon^\prime(x_0,u) \geq \sup\left\{ c(u,(x^*,u^*,\alpha))\,:\,(x^*,u^*,\alpha)\in\partial_{c,\varepsilon} f(x_0) \cap (X^*\times \mathcal{N}(\dom f,x_0)\times \R_{++})\right\}. 
\end{equation*}
\end{corollary}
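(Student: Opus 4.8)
The plan is to derive this corollary directly from Theorem~\ref{th:Equality_subdif_f_and_ddf}, exploiting that the left-hand side of the set equality there is a $c$-subdifferential of the $\varepsilon$-directional derivative, hence consists of $c$-subgradients.

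\medskip

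\noindent\textbf{Proof plan.} First I would fix $u\in X$ and take an arbitrary element $(x^*,u^*,\alpha)$ of the set $\partial_{c,\varepsilon}f(x_0)\cap(X^*\times\mathcal{N}(\dom f,x_0)\times\R_{++})$ over which the supremum on the right-hand side is computed. By Theorem~\ref{th:Equality_subdif_f_and_ddf}, this set equals $\partial_c f_\varepsilon^\prime(x_0,\cdot)(0)\cap(X^*\times\{(u^*,\alpha):\ci x_0,u^*\cd<\alpha\})$; in particular $(x^*,u^*,\alpha)\in\partial_c f_\varepsilon^\prime(x_0,\cdot)(0)$. (Here one uses $\varepsilon>0$ so that $x_0\in\dom f$ guarantees $f_\varepsilon^\prime(x_0,\cdot)$ is proper with value $0$ at the origin, so the $c$-subdifferential is the genuine object from Definition~\ref{def:c-subgradient}.) Unravelling the definition of $c$-subgradient for the function $f_\varepsilon^\prime(x_0,\cdot)$ at the point $0$, and using $f_\varepsilon^\prime(x_0,0)=0$, gives
\begin{equation*}
	f_\varepsilon^\prime(x_0,v) \geq c(v,(x^*,u^*,\alpha)) - c(0,(x^*,u^*,\alpha)) = c(v,(x^*,u^*,\alpha)),\qquad\text{for all }v\in X,
\end{equation*}
where $c(0,(x^*,u^*,\alpha))=0$ because $\ci 0,u^*\cd=0<\alpha$ (recall $\alpha\in\R_{++}$). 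Specializing $v=u$ yields $f_\varepsilon^\prime(x_0,u)\geq c(u,(x^*,u^*,\alpha))$.

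\medskip

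\noindent Since this inequality holds for every element $(x^*,u^*,\alpha)$ of the indexing set, taking the supremum over that set on the right-hand side preserves it, which is exactly the claimed inequality. If the indexing set happens to be empty, the supremum is $-\infty$ by convention and the inequality is trivial.

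\medskip

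\noindent\textbf{Main obstacle.} There is essentially no hard step: the whole content has been placed in Theorem~\ref{th:Equality_subdif_f_and_ddf}. The only point requiring a little care is the bookkeeping with the coupling function at the origin --- checking that $c(0,(x^*,u^*,\alpha))=0$ and that the term $f_\varepsilon^\prime(x_0,0)$ vanishes --- together with noting why the restriction to $\alpha\in\R_{++}$ (built into the indexing set) makes these evaluations legitimate, so that the abstract $c$-subgradient inequality specializes cleanly to the stated estimate.
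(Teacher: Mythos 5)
Your argument is correct and matches the paper's: the corollary is stated there as following immediately from Theorem \ref{th:Equality_subdif_f_and_ddf}, precisely by reading each element of the right-hand set as a $c$-subgradient of $f_\varepsilon^\prime(x_0,\cdot)$ at $0$ and using $f_\varepsilon^\prime(x_0,0)=0=c(0,(x^*,u^*,\alpha))$, which is legitimate because $\alpha\in\R_{++}$. Your bookkeeping at the origin is exactly the right (and only) point of care.
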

\begin{remark}
It is not an easy task to find sufficient conditions for Corollary \ref{cor:Equality_subdif_f_and_ddf} to hold with an equality, maybe a separation theorem for e-convex sets needs to be studied, in some way, besides asking the function $f$ to be e-convex, for instance. Hence, we have decided to leave this problem to work on it in a near future.
\end{remark}

\section{Optimality conditions via $c$-subdifferentials}
\label{sec:5}

 In \cite{HU1988}, Hiriart-Urruty established $\varepsilon$-optimality and global optimality conditions for DC programs, which are, recall, optimization problems of the type $\inf_X \{ f(x)-g(x) \}$, where $f$ and $g$ are convex functions. To avoid ambiguity, we will use the usual convention $+ \infty-(+\infty)=+\infty$ when minimizing DC problems. Recall that, for any $\varepsilon \geq 0$, a point $a \in X$ is said to be an  $\varepsilon$-minimizer of a function $h:X \to \Ramp$ if $a \in \dom h $ and $$h(a)-\varepsilon \leq h(x),$$ for all $x \in X$.
Those optimality conditions are obtained via subdifferential and $\varepsilon$-subdifferential sets of the involved functions in the problem. \\

\begin{theorem}[{\cite[Th. 4.4]{HU1988}}]\label{lemmaHU}
Let $f,g \colon X \to \Ramp$ be proper convex and lsc functions. A necessary and sufficient condition for the point $a$ to be an $\varepsilon$-minimizer of $f-g$ is that  
$$\partial_{\lambda} g(a) \subseteq \partial_{\varepsilon+\lambda} f(a), \text{ for all } \lambda \geq 0.$$
In particular, $a \in X$ is a global minimizer of $f-g$ if and only if
$$\partial_\varepsilon g(a) \subseteq \partial_\varepsilon f(a), \text{ for all } \varepsilon \geq 0.$$
\end{theorem}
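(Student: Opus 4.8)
This is a classical result of Hiriart-Urruty, so the strategy is to reproduce the standard argument using only tools that precede the statement. The plan is to work through the $\varepsilon$-minimizer characterization first, since the global optimality statement follows by taking $\varepsilon$ arbitrary (or, equivalently, by applying the first part with $\varepsilon$ fixed and noting that a global minimizer is a $0$-minimizer, then observing the equivalence for all $\varepsilon\ge 0$). The core equivalence to establish is: $a$ is an $\varepsilon$-minimizer of $f-g$ if and only if for every $\lambda\ge 0$, $\partial_\lambda g(a)\subseteq \partial_{\varepsilon+\lambda}f(a)$.

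\textbf{Necessity.} Assume $a$ is an $\varepsilon$-minimizer, so $a\in\dom f\cap\dom g$ and $f(x)-g(x)\ge f(a)-g(a)-\varepsilon$ for all $x\in X$. Fix $\lambda\ge 0$ and take $x^*\in\partial_\lambda g(a)$; this means $g(x)-g(a)\ge \ci x-a,x^*\cd-\lambda$ for all $x\in X$. Adding the two inequalities and rearranging, $f(x)-f(a)\ge \ci x-a,x^*\cd-(\varepsilon+\lambda)$ for all $x\in X$, which is exactly $x^*\in\partial_{\varepsilon+\lambda}f(a)$. Here one must check $a\in\dom f$, which is part of the $\varepsilon$-minimizer hypothesis, so the $\varepsilon$-subdifferential is not vacuous for that reason.

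\textbf{Sufficiency.} Assume $\partial_\lambda g(a)\subseteq\partial_{\varepsilon+\lambda}f(a)$ for all $\lambda\ge 0$. The key point is that for $g$ proper convex lsc and $a\in\dom g$, the $\lambda$-subdifferential $\partial_\lambda g(a)$ is nonempty for every $\lambda>0$ (Brøndsted–Rockafellar / the characterization via the Fenchel conjugate, as in \cite{Z2002}); in fact one has $g(a)=\sup_{x^*}\{\ci a,x^*\cd-g^*(x^*)\}$ and for any $\lambda>0$ there is $x^*$ with $g^*(x^*)+g(a)-\ci a,x^*\cd\le\lambda$, i.e. $x^*\in\partial_\lambda g(a)$. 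Pick such an $x^*\in\partial_\lambda g(a)\subseteq\partial_{\varepsilon+\lambda}f(a)$. Then for all $x$, $f(x)-f(a)\ge\ci x-a,x^*\cd-(\varepsilon+\lambda)$ and $g(x)-g(a)\le\ci x-a,x^*\cd+\lambda$ (the latter because $x^*\in\partial_\lambda g(a)$ forces $g^*(x^*)\le\ci a,x^*\cd-g(a)+\lambda$, hence $\ci x,x^*\cd-g(x)\le g^*(x^*)\le\ci a,x^*\cd-g(a)+\lambda$). Subtracting gives $(f-g)(x)\ge(f-g)(a)-\varepsilon-2\lambda$ for all $x$. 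Letting $\lambda\downarrow 0$ yields $(f-g)(x)\ge(f-g)(a)-\varepsilon$, so $a$ is an $\varepsilon$-minimizer.

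\textbf{Main obstacle.} The only delicate ingredient is the nonemptiness of $\partial_\lambda g(a)$ for $\lambda>0$, which is where properness, convexity, and lower semicontinuity of $g$ are genuinely used (together with the Fenchel–Moreau theorem so that $g=g^{**}$); without this the sufficiency direction collapses. The necessity direction is a routine two-line inequality manipulation. For the ``in particular'' global statement: a global minimizer is a $0$-minimizer, and conversely; applying the first equivalence with $\varepsilon=0$ would demand $\partial_\lambda g(a)\subseteq\partial_\lambda f(a)$ for all $\lambda\ge 0$, which after relabeling $\lambda$ as $\varepsilon$ is the stated condition. One should note that the $\varepsilon=0$, $\lambda=0$ case reads $\partial g(a)\subseteq\partial f(a)$ and is subsumed; the limiting argument $\lambda\downarrow 0$ above handles the passage cleanly since the conclusion $(f-g)(x)\ge(f-g)(a)$ does not require $\partial_0 g(a)$ to be nonempty.
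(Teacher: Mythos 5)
The paper itself offers no proof of this statement: it is quoted verbatim from \cite[Th.~4.4]{HU1988}, with the subsequent remark pointing to \cite{MLS1992} for an alternative proof. So your attempt can only be measured against the classical argument. Your necessity direction is correct (adding the $\varepsilon$-minimizer inequality to the $\lambda$-subgradient inequality for $g$, with the degenerate cases $a\notin\dom g$ or $x\notin\dom f$ handled trivially), and the reduction of the ``in particular'' statement is fine. The sufficiency direction, however, contains a genuine error.

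You claim that $x^*\in\partial_\lambda g(a)$ yields the \emph{upper} bound $g(x)-g(a)\le\ci x-a,x^*\cd+\lambda$. The chain you invoke, $\ci x,x^*\cd-g(x)\le g^*(x^*)\le\ci a,x^*\cd-g(a)+\lambda$, rearranges to $g(x)-g(a)\ge\ci x-a,x^*\cd-\lambda$, which is a \emph{lower} bound --- the defining inequality of $\partial_\lambda g(a)$ again. The upper bound is false in general (take $g(x)=x^2$, $a=0$, $x^*$ near $0$: $x^2\le x^*x+\lambda$ fails for large $x$). Hence the step ``subtracting gives $(f-g)(x)\ge(f-g)(a)-\varepsilon-2\lambda$'' does not follow: you are subtracting two lower bounds. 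The failure is structural, not cosmetic: no single $x^*\in\partial_\lambda g(a)$ can control $g$ from above everywhere, since the affine minorant of $g$ that is nearly tight at a given $x$ varies with $x$; your argument would essentially only prove the theorem for affine $g$. The standard repair uses the whole family at once: for each $x^*\in\dom g^*$ put $\lambda:=g(a)+g^*(x^*)-\ci a,x^*\cd\ge0$ (Fenchel--Young), so that $x^*\in\partial_\lambda g(a)$; the hypothesis then gives $f^*(x^*)+f(a)\le\ci a,x^*\cd+\lambda+\varepsilon$, i.e.\ $f^*(x^*)-g^*(x^*)\le g(a)-f(a)+\varepsilon$ for every $x^*\in\dom g^*$. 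Since $g=g^{**}$ (this is where properness, convexity and lower semicontinuity of $g$ enter, together with $\dom g^*\neq\emptyset$), one gets $f(x)-g(x)=\inf_{x^*\in\dom g^*}\{f(x)-\ci x,x^*\cd+g^*(x^*)\}\ge\inf_{x^*\in\dom g^*}\{g^*(x^*)-f^*(x^*)\}\ge f(a)-g(a)-\varepsilon$. Your observation that nonemptiness of $\partial_\lambda g(a)$ for $\lambda>0$ is the crucial use of the hypotheses on $g$ is on the right track, but the real point is that $\bigcup_{\lambda\ge0}\partial_\lambda g(a)$ exhausts $\dom g^*$, and the argument must quantify over all of it.
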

\begin{remark}
An alternative proof of Theorem \ref{lemmaHU} is given in \cite{MLS1992}, where it is pointed out that the convexity and lower semicontinuity of $f$ are not essential assumptions.
\end{remark}
In this section, our purpose is to provide a counterpart of Theorem \ref{lemmaHU}, expressed in terms of even convexity and $c$-subdifferentiability. We will use the following definition of the difference of two sets in $W$.%


\begin{definition}[{\cite{HU1986}}]\label{def:Minus_dot_operation}
Given $A, B \subseteq W$, the \emph{star-difference} between $A$ and $B$ is
\begin{equation*}
	A \overset{*}{-}B=\left \{ (x^*, u^*, \alpha) \in W: \lbrace (x^*, u^*, \alpha) \rbrace+B \subset A \right \},
\end{equation*}
involving the Minkowski addition of two sets.
\end{definition}
\begin{remark}
In case $B=\emptyset$, for all $A \subseteq W$, $A\overset{*}{-}B=A$. Moreover, the difference $\emptyset \overset{*}{-}B$  only has meaning (and it is $\emptyset$) whenever $B=\emptyset$.
\end{remark}
Next lemma can be derived from \cite[Th.~3.1]{ML1990}, which is stated in the generalized conjugation theory framework. We include the proof for the sake of completeness.
\begin{lemma}\label{lemma4}
Let $f \colon X \to \Ramp$ be a proper function and $g \colon X \to \Ramp$ be a proper convex lsc function. Then
\begin{equation*}
	\sup_{x\in X} \left \{ g(x)-f(x) \right \}=\sup_{(x^*, u^*, \alpha) \in W} \left \{ f^c (x^*, u^*, \alpha)-g^c(x^*, u^*, \alpha) \right \}.
\end{equation*}
\end{lemma}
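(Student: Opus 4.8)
The plan is to prove the two inequalities separately, exploiting the definition of $c$-conjugate and the Fenchel--Moreau-type duality (Theorem \ref{th:Theorem2.2}) together with the fact that the Fenchel biconjugate of a proper convex lsc function $g$ returns $g$ (which also holds for the $c$-conjugation scheme since proper convex lsc functions are e-convex). First I would establish the inequality
\begin{equation*}
	\sup_{x\in X} \left \{ g(x)-f(x) \right \} \leq \sup_{(x^*, u^*, \alpha) \in W} \left \{ f^c (x^*, u^*, \alpha)-g^c(x^*, u^*, \alpha) \right \}.
\end{equation*}
For this, fix $x \in X$ with $g(x) - f(x)$ meaningful, and use that $g$ is e-convex so $g = g^{cc^{\prime}}$, i.e.\ $g(x) = \sup_{(x^*,u^*,\alpha)\in W}\{c^{\prime}((x^*,u^*,\alpha),x) - g^c(x^*,u^*,\alpha)\}$. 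Thus for every $(x^*,u^*,\alpha)\in W$ one has $g(x) \geq c(x,(x^*,u^*,\alpha)) - g^c(x^*,u^*,\alpha)$, while by definition of $f^c$ one has $f^c(x^*,u^*,\alpha) \geq c(x,(x^*,u^*,\alpha)) - f(x)$, hence $-f(x) \leq f^c(x^*,u^*,\alpha) - c(x,(x^*,u^*,\alpha))$. Adding these (taking care of the sign conventions when the coupling is $+\infty$, in which case the bound on $g(x)$ is vacuous but one passes to a triple achieving a finite coupling, or handles the degenerate cases directly), one gets $g(x) - f(x) \leq f^c(x^*,u^*,\alpha) - g^c(x^*,u^*,\alpha)$ for a suitable choice of triple, and taking suprema yields the claim.

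For the reverse inequality
\begin{equation*}
	\sup_{(x^*, u^*, \alpha) \in W} \left \{ f^c (x^*, u^*, \alpha)-g^c(x^*, u^*, \alpha) \right \} \leq \sup_{x\in X} \left \{ g(x)-f(x) \right \},
\end{equation*}
I would fix $(x^*,u^*,\alpha) \in W$ with $f^c(x^*,u^*,\alpha) - g^c(x^*,u^*,\alpha)$ meaningful. By definition $f^c(x^*,u^*,\alpha) = \sup_{x\in X}\{c(x,(x^*,u^*,\alpha)) - f(x)\}$, so for every $x\in X$, $c(x,(x^*,u^*,\alpha)) - f(x) \leq f^c(x^*,u^*,\alpha)$, i.e.\ $-f(x) \leq f^c(x^*,u^*,\alpha) - c(x,(x^*,u^*,\alpha))$ whenever $\ci x,u^*\cd < \alpha$. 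On the other hand, since $g = g^{cc^{\prime}}$, for each such $x$ we have $g(x) \geq c(x,(x^*,u^*,\alpha)) - g^c(x^*,u^*,\alpha)$. Adding, $g(x) - f(x) \geq c(x,(x^*,u^*,\alpha)) - g^c(x^*,u^*,\alpha) + f^c(x^*,u^*,\alpha) - c(x,(x^*,u^*,\alpha)) = f^c(x^*,u^*,\alpha) - g^c(x^*,u^*,\alpha)$ for every $x$ with $\ci x,u^*\cd<\alpha$. One then needs such an $x$ to exist: if $g^c(x^*,u^*,\alpha) < +\infty$ then $g^{cc^{\prime}}$ at a point in $\dom g$ forces the halfspace $H^{<}_{u^*,\alpha}$ to meet $\dom g$; if no admissible $x$ exists then $f^c(x^*,u^*,\alpha) = -\infty$ and the inequality is trivial. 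Taking the supremum over $x$ on the left and then over the triple on the right gives the claim.

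The main obstacle I anticipate is the careful bookkeeping of the extended-arithmetic sign conventions and the degenerate cases --- in particular, handling triples $(x^*,u^*,\alpha)$ for which the coupling $c(x,(x^*,u^*,\alpha)) = +\infty$ for the relevant $x$ (so that $f^c$ or $g^c$ takes the value $\pm\infty$), and making sure that the "addition of inequalities" step is legitimate with the stated conventions $(+\infty)-(+\infty) = -\infty$. The structural content is entirely carried by $g = g^{cc^{\prime}}$ (valid because proper convex lsc $\Rightarrow$ e-convex, via Theorem \ref{th:Theorem2.2} iii)) and the defining sup of the $c$-conjugate; no separation argument beyond what is already built into the $c$-conjugation machinery is needed.
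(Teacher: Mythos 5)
Your argument is correct in substance and is essentially the paper's proof unpacked: the paper also rests entirely on $g=g^{cc^{\prime}}$ (Theorem \ref{th:Theorem2.2}, since proper convex lsc implies e-convex) and then obtains the identity in one chain of equalities by substituting the supremum defining $g^{cc^{\prime}}$ and interchanging the two suprema, so that the inner supremum over $x$ becomes $f^{c}$; your two separate inequalities are just the two halves of that interchange. One intermediate step in your reverse direction is misstated: the pointwise claim $g(x)-f(x)\geq f^{c}(x^*,u^*,\alpha)-g^{c}(x^*,u^*,\alpha)$ for \emph{every} admissible $x$ is false (e.g.\ $f(x)=x^2$, $g\equiv 0$, $(x^*,u^*,\alpha)=(0,0,1)$), since from the definitions you only get $c(x,\cdot)-f(x)\leq f^{c}$, not the reverse; the correct route is $\sup_{x}\{g(x)-f(x)\}\geq\sup_{x}\{c(x,\cdot)-g^{c}-f(x)\}=f^{c}-g^{c}$, i.e.\ take the supremum over $x$ before invoking the definition of $f^{c}$. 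This is a repairable quantifier slip, not a structural gap, and the rest of your bookkeeping (sign conventions, empty half-space degeneracies) is handled appropriately.
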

\begin{proof}
From Theorem \ref{th:Theorem2.2}, it yields
\begin{equation*}
	\sup_{x\in X} \left\{ g(x)-f(x) \right\} = \sup_{x\in X} \left\{ g^{cc^{\prime}} (x)-f(x) \right\}
\end{equation*}
and applying the definition of $c^\prime$-conjugate,
\begin{align*}
	\sup_{x\in X} \left\{ g(x)-f(x) \right\} &= \sup_{x\in X} \left\{ \sup_{(x^*, u^*, \alpha) \in W} \left\{ c^{\prime}( (x^*, u^*, \alpha),x)-g^c (x^*, u^*,\alpha)\right\} -f(x) \right \}\\[0.2cm]
 	&= \sup_{(x^*, u^*, \alpha) \in W}\left \{ \sup_{x\in X} \left \{ c(x,(x^*, u^*, \alpha))- f(x) \right \} - g^c (x^*, u^*, \alpha)\right \}\\[0.2cm]
 	&= \sup_{(x^*, u^*, \alpha) \in W} \left \{ f^c (x^*, u^*, \alpha)-g^c(x^*, u^*, \alpha) \right \},
\end{align*}
where in the last equality we have used the definition of $c$-conjugate function.
\end{proof}

The following theorem is stated with equality when $f$ and $g$ are as in Theorem \ref{lemmaHU} and $\varepsilon$-subdifferential sets are used; see Theorem 1 in \cite{MLS1992}.
\begin{theorem} \label{lemma5}
Let $f,g \colon X \to \Ramp$ be proper functions with $g$ e-convex, and $\varepsilon \geq 0$. Then, for all $x \in X$, it holds
\begin{equation*}
	\partial_{c, \varepsilon} (f-g) (x) \subseteq \bigcap_{\lambda \geq 0} \left \{ \partial_{c, \varepsilon + \lambda} f(x) \overset{*}{-}  \partial_{c, \lambda} g(x) \right \}.
\end{equation*}

\end{theorem}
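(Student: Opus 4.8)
The plan is to show that if $(x^*,u^*,\alpha) \in \partial_{c,\varepsilon}(f-g)(x)$, then for every $\lambda \geq 0$ we have $\{(x^*,u^*,\alpha)\} + \partial_{c,\lambda}g(x) \subseteq \partial_{c,\varepsilon+\lambda}f(x)$, which is exactly the star-difference membership. Fix $\lambda \geq 0$ and pick $(y^*,v^*,\beta) \in \partial_{c,\lambda}g(x)$; if this set is empty there is nothing to prove (recall the convention on $A \overset{*}{-}\emptyset = A$ and that the sets $\partial_{c,\varepsilon+\lambda}f(x)$ all contain $\partial_{c,\varepsilon}(f-g)(x)$ once the latter is nonempty, so the inclusion into the intersection holds trivially in that degenerate case — this edge case should be checked but is routine). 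So assume both subdifferential sets are nonempty, which forces $x \in \dom f$, $x\in\dom g$, $g(x)\in\R$ and $(f-g)(x)\in\R$.

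The core computation proceeds from the defining inequalities. From $(x^*,u^*,\alpha)\in\partial_{c,\varepsilon}(f-g)(x)$ we have $\langle x,u^*\rangle<\alpha$ and, for all $z\in X$,
\begin{equation*}
(f-g)(z)-(f-g)(x) \geq c(z,(x^*,u^*,\alpha))-c(x,(x^*,u^*,\alpha))-\varepsilon,
\end{equation*}
and from $(y^*,v^*,\beta)\in\partial_{c,\lambda}g(x)$ we have $\langle x,v^*\rangle<\beta$ and, for all $z\in X$,
\begin{equation*}
g(z)-g(x) \geq c(z,(y^*,v^*,\beta))-c(x,(y^*,v^*,\beta))-\lambda.
\end{equation*}
Adding these two inequalities, the $g$ terms cancel on the left, leaving $f(z)-f(x)$ on the left and, on the right, a sum of two coupling terms minus $\varepsilon-\lambda$. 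Here I would invoke the additivity/subadditivity of the coupling function $c$ — specifically that $c(z,(x^*,u^*,\alpha)) + c(z,(y^*,v^*,\beta)) \geq c(z,(x^*+y^*,u^*+v^*,\alpha+\beta))$ on the left-hand side (when all finite this is an equality $\langle z,x^*\rangle + \langle z,y^*\rangle = \langle z,x^*+y^*\rangle$ provided both $\langle z,u^*\rangle<\alpha$ and $\langle z,v^*\rangle<\beta$ so the combined constraint $\langle z,u^*+v^*\rangle<\alpha+\beta$ holds) and symmetrically that $c(x,(x^*+y^*,u^*+v^*,\alpha+\beta)) \geq c(x,(x^*,u^*,\alpha)) + c(x,(y^*,v^*,\beta))$ since $\langle x,u^*\rangle<\alpha$ and $\langle x,v^*\rangle<\beta$ together give $\langle x,u^*+v^*\rangle<\alpha+\beta$ and the three values agree. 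Chaining these yields, for all $z\in X$,
\begin{equation*}
f(z)-f(x) \geq c(z,(x^*+y^*,u^*+v^*,\alpha+\beta)) - c(x,(x^*+y^*,u^*+v^*,\alpha+\beta)) - (\varepsilon+\lambda),
\end{equation*}
together with $\langle x,u^*+v^*\rangle<\alpha+\beta$ and $f(x)\in\R$, which is precisely $(x^*+y^*,u^*+v^*,\alpha+\beta)\in\partial_{c,\varepsilon+\lambda}f(x)$.

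The main obstacle is the careful handling of the coupling function's additivity, since $c$ takes the value $+\infty$ outside the half-space constraint and the inequalities involve potentially infinite quantities; one must argue the direction of each inequality in $c(z,\cdot)+c(z,\cdot)$ versus $c(z,\cdot+\cdot)$ correctly (and likewise the reverse direction at the point $x$) so that the chain of estimates points the right way — this is the same subadditivity manipulation used in the proof of Theorem~\ref{theorem3}, item iii), and in Theorem~\ref{th:Equality_subdif_f_and_ddf}, so it can be invoked in that spirit. A secondary point to dispatch is the convention bookkeeping: confirming that $(f-g)(x)\in\R$ is guaranteed (it is, since $\partial_{c,\varepsilon}(f-g)(x)\neq\emptyset$ forces it by definition of the $\varepsilon$-$c$-subdifferential), and that the degenerate cases $\partial_{c,\lambda}g(x)=\emptyset$ and $\partial_{c,\varepsilon}(f-g)(x)=\emptyset$ are consistent with the claimed inclusion given the Remark following Definition~\ref{def:Minus_dot_operation}.
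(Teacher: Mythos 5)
Your proof is correct, but it takes a genuinely different and more elementary route than the paper's. The paper does not add the two subgradient inequalities directly; instead it characterizes $(x^*,u^*,\alpha)\in\partial_{c,\varepsilon}(f-g)(x_0)$ via the conjugate inequality of Theorem \ref{theorem3} $ii)$, introduces the auxiliary e-convex function $h=g+c(\cdot,(x^*,u^*,\alpha))$, rewrites $(f-g)^c(x^*,u^*,\alpha)=\sup_X\{h-f\}$, converts that supremum into $\sup_W\{f^c-h^c\}$ through the Toland-type duality of Lemma \ref{lemma4} (this is precisely where the e-convexity of $g$, hence of $h$, enters), deduces $\partial_{c,\lambda}h(x_0)\subseteq\partial_{c,\varepsilon+\lambda}f(x_0)$, and finally verifies $(x^*+y^*,u^*+v^*,\alpha+\beta)\in\partial_{c,\lambda}h(x_0)$. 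Your direct summation of the two defining inequalities, combined with the additivity of the coupling function on the set where both $\ci \cdot,u^*\cd<\alpha$ and $\ci \cdot,v^*\cd<\beta$ hold, reaches the same conclusion without conjugates and without Lemma \ref{lemma4}; in particular it shows that the e-convexity of $g$ is not actually needed for this inclusion (it matters only for a converse). The case analysis you defer does work out: nonemptiness of $\partial_{c,\varepsilon}(f-g)(x)$ forces $(f-g)>-\infty$ everywhere, hence $\dom f\subseteq\dom g$, so no $(+\infty)+(-\infty)$ forms occur when adding, and whenever a coupling term at $z$ equals $+\infty$ one deduces $f(z)=+\infty$, making the target inequality trivial there.

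One small correction on the degenerate case $\partial_{c,\lambda}g(x)=\emptyset$: your parenthetical claim that $\partial_{c,\varepsilon}(f-g)(x)\subseteq\partial_{c,\varepsilon+\lambda}f(x)$ is not true in general (without a subgradient of $g$ there is no lower bound on $g(z)-g(x)$ allowing you to pass from $f-g$ to $f$), but it is also not needed: by Definition \ref{def:Minus_dot_operation}, membership of $(x^*,u^*,\alpha)$ in $A\overset{*}{-}B$ means $\{(x^*,u^*,\alpha)\}+B\subseteq A$, which is vacuous when $B=\emptyset$; this is all the paper's own proof establishes in that case as well.
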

\begin{proof}
Assume that $\partial_{c, \varepsilon} (f-g) (x_0) \neq \emptyset$, for some $ x_0 \in X$, and take any $(x^*, u^*, \alpha) \in \partial_{c, \varepsilon} (f-g) ( x_0)$ and $\lambda \geq 0$. We will show the inclusion
\begin{equation}\label{eq6}
(x^*, u^*, \alpha) + \partial_{c, \lambda} g(x_0) \subseteq \partial_{c, \varepsilon + \lambda} f(x_0).
\end{equation}
By Theorem \ref{theorem3} $ii)$, $(x^*, u^*,\alpha) \in \partial_{c, \varepsilon} (f-g) ( x_0)$ if and only if  $ \langle   x_0,  u^* \rangle <  \alpha$ and
\begin{equation}\label{eq7}
(f-g)( x_0) + (f-g)^c(x^*, u^*,\alpha) \leq c( x_0, (x^*, u^*, \alpha))+ \varepsilon.
\end{equation}
Now,  denoting $h=g+c(\cdot, (x^*, u^*, \alpha))$, which is e-convex\footnote{ In \cite{RVP2011} is established on functions defined on $\R^n$, and it can be easily extended to the framework of locally convex spaces.}, we have
\begin{align*}
(f-g)^c (x^*, u^*, \alpha)&= \sup_{x \in X} \left \{ c(x,(x^*, u^*, \alpha))-f(x)+g(x) \right \} \\
&=\sup_{x \in X}  \left \{ h(x)-f(x) \right \}.
\end{align*}
Rewriting (\ref{eq7}),  $(x^*, u^*, \alpha) \in \partial_{c, \varepsilon} (f-g) (x_0)$ if and only if $ \langle   x_0,  u^* \rangle <  \alpha$ and
\begin{equation*}
\sup_{x \in X}  \left \{ h( x)-f( x) \right \} \leq h(x_0) -f(x_0)  + \varepsilon,
\end{equation*}
or, equivalently, according to Lemma \ref{lemma4},  $\langle   x_0,  u^* \rangle <  \alpha$ and
\begin{equation*}
f^c (y^*, v^*, \beta)+f( x_0) \leq h^c(y^*, v^*, \beta)  +h( x_0) + \varepsilon,
\end{equation*}
for all $(y^*, v^*, \beta) \in W$.
In virtue of Theorem \ref{theorem3} $ii)$, we conclude that, if $(x^*, u^*, \alpha) \in \partial_{c, \varepsilon} (f-g) (x_0)$, then 
\begin{equation} \label{eq8}
\partial_{c, \lambda}h (x_0) \subseteq \partial_{c,\lambda+ \varepsilon} f (x_0).
\end{equation}
To prove (\ref{eq6}), take $(y^*, v^*, \beta) \in \partial_{c, \lambda}g (x_0) $. We will see that $(x^*+y^*, u^*+v^*, \alpha+\beta ) \in \partial_{c, \lambda}h (x_0)$, and applying (\ref{eq8}), we will obtain (\ref{eq6}).
We have 
\begin{eqnarray} \label{eq9}
h^c(x^*+y^*, u^*+v^*, \alpha+\beta ) &=&\sup_{x \in X} \left \{ c(x,(x^*+y^*, u^*+v^*, \alpha+\beta ))-h(x) \right \} \nonumber \\ [0.2cm]
&\leq& \sup_{x \in X} \left \{c(x, (y^*, v^*, \beta))-g(x) \right \}=g^c(y^*, v^*, \beta).
\end{eqnarray}
Taking into account that $\left \langle x_0, u^* \right \rangle <\alpha$ and  $\left \langle x_0, v^* \right \rangle <\beta$, which implies that
\begin{equation*}
c( x_0,(x^*+y^*, u^*+v^*, \alpha+\beta ))=c(x_0,(x^*, u^*, \alpha))+c(x_0,(y^*, v^*, \beta)),
\end{equation*}
we obtain, by (\ref {eq9}) and moreover by  Theorem \ref{theorem3} $ii)$ applied to the fact that $(y^*, v^*, \beta) \in \partial_{c, \lambda}g ( x_0) $,
\begin{eqnarray*}
h( x_0)+h^c(x^*+y^*, u^*+v^*, \alpha+\beta ) &\leq & g(x_0) +c( x_0, (x^*, u^*, \alpha))+ g^c(y^*, v^*, \beta)\\ [0.2cm]
&\leq& c(x_0,(x^*, u^*, \alpha))+c(x_0,(y^*, v^*, \beta))+ \lambda \\[0.2cm]
&=& c(x_0,(x^*+y^*, u^*+v^*, \alpha+\beta ))+\lambda,
\end{eqnarray*}
and, again by Theorem \ref{theorem3} $ ii)$,  $(x^*+y^*, u^*+v^*, \alpha+\beta ) \in \partial_{c, \lambda}h (x_0)$.
\end{proof}

We present now a necessary condition for global optimality.
\begin{corollary}\label{cor1}
Let $f,g \colon X \to \Ramp$ be proper functions with $g$ e-convex. If $a \in X$ is a global minimizer of $f-g$, then 
$$\partial_{c,\varepsilon} g(a) \subseteq \partial_{c, \varepsilon} f(a),$$
for all $\varepsilon \geq 0$. 
\end{corollary}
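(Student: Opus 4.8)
The plan is to derive this as a direct consequence of Theorem \ref{lemma5}. If $a$ is a global minimizer of $f-g$, then by the convention $+\infty-(+\infty)=+\infty$ and the definition of minimizer, $a$ is an $\varepsilon$-minimizer of $f-g$ for $\varepsilon=0$; more precisely, $(f-g)(a)\leq(f-g)(x)$ for all $x\in X$, which says exactly that $0\in\partial_{c,0}(f-g)(a)$ provided we can realize the zero vector of $W$ as a genuine $c$-subgradient. The subtlety is that a vector of $W$ is only a $0$-$c$-subgradient at $a$ if $\langle a,u^*\rangle<\alpha$ for its last two coordinates; the literal origin $(0,0,0)\in W$ fails this since $\langle a,0\rangle=0\not<0$. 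So first I would argue that $\partial_{c,\varepsilon}(f-g)(a)\neq\emptyset$ by exhibiting an explicit element: choose $u^*\in X^*$ and $\alpha\in\R$ with $\langle a,u^*\rangle<\alpha$ (possible since $X^*$ separates points and $X$ is a nontrivial lcs, or trivially by taking $u^*=0$, $\alpha=1$), and observe that for $x\in\dom(f-g)$ one needs $\langle x,u^*\rangle<\alpha$ as well; taking $u^*=0$ and $\alpha=1$ the coupling $c(x,(0,0,1))=\langle x,0\rangle=0$ for all $x$, so the $c$-subgradient inequality reduces to $(f-g)(x)-(f-g)(a)\geq 0$, which holds by global minimality. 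Hence $(0,0,1)\in\partial_{c,0}(f-g)(a)\subseteq\partial_{c,\varepsilon}(f-g)(a)$ for every $\varepsilon\geq 0$.

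Next I would invoke Theorem \ref{lemma5} with this $\varepsilon$: for all $x\in X$,
\begin{equation*}
	\partial_{c,\varepsilon}(f-g)(x)\subseteq\bigcap_{\lambda\geq 0}\left\{\partial_{c,\varepsilon+\lambda}f(x)\overset{*}{-}\partial_{c,\lambda}g(x)\right\}.
\end{equation*}
Specializing to $x=a$ and taking $\lambda=0$, the element $(0,0,1)\in\partial_{c,\varepsilon}(f-g)(a)$ lies in $\partial_{c,\varepsilon}f(a)\overset{*}{-}\partial_{c,0}g(a)$, which by Definition \ref{def:Minus_dot_operation} means precisely
\begin{equation*}
	\{(0,0,1)\}+\partial_{c}g(a)\subseteq\partial_{c,\varepsilon}f(a).
\end{equation*}
That gives only a shifted inclusion, not the clean $\partial_{c,\varepsilon}g(a)\subseteq\partial_{c,\varepsilon}f(a)$ the corollary claims. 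To fix this I would instead work with a \emph{family} of "trivial" subgradients: for any $u^*\in X^*$ and $\alpha\in\R$ with $\langle a,u^*\rangle<\alpha$ and $\dom(f-g)\subseteq H_{u^*,\alpha}^<$, the computation above shows $(0,u^*,\alpha)\in\partial_{c,0}(f-g)(a)$. Then Theorem \ref{lemma5} (with $\lambda=0$) yields $\{(0,u^*,\alpha)\}+\partial_{c}g(a)\subseteq\partial_{c,\varepsilon}f(a)$. The point is to choose the shift so as to absorb it: given $(y^*,v^*,\beta)\in\partial_{c,\varepsilon}g(a)$, I want to write it in a form where subtracting the shift and re-adding it stays inside. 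Actually the cleanest route is to apply Theorem \ref{lemma5} with the roles arranged so that the star-difference has $\lambda=\varepsilon$ and the outer index is also controlled.

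Let me reconsider and use instead $\lambda=\varepsilon$, outer index $0$ is wrong since we need $\partial_{c,0}(f-g)(a)$ which we have; Theorem \ref{lemma5} with the fixed $\varepsilon=0$ on the left gives $\partial_{c,0}(f-g)(a)\subseteq\partial_{c,\lambda}f(a)\overset{*}{-}\partial_{c,\lambda}g(a)$ for all $\lambda\geq 0$. So for our element $(0,u^*,\alpha)$ and any $\lambda\geq 0$: $\{(0,u^*,\alpha)\}+\partial_{c,\lambda}g(a)\subseteq\partial_{c,\lambda}f(a)$. Now take $\lambda=\varepsilon$ and pick $(y^*,v^*,\beta)\in\partial_{c,\varepsilon}g(a)$. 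By \eqref{relationepsilon}, $(y^*,v^*,\beta)\in\partial_{c,\varepsilon}g(a)$ forces $\dom g\subseteq H_{v^*,\beta}^<$, and since $a$ is a minimizer one checks $\dom(f-g)\subseteq\dom g$ up to the relevant domain bookkeeping, so the pair $(v^*,\beta)$ itself satisfies $\langle a,v^*\rangle<\beta$ and $\dom(f-g)\subseteq H_{v^*,\beta}^<$; hence I may take $(u^*,\alpha)=(v^*,\beta)$ in the trivial-subgradient family. Wait—that requires the shift to cancel. Actually the decisive observation is: $(y^*,v^*,\beta)\in\partial_{c,\varepsilon}g(a)$ iff $\langle a,v^*\rangle<\beta$ and $g(x)-g(a)\geq\langle x,y^*\rangle-\langle a,y^*\rangle-\varepsilon$ on $\{x:\langle x,v^*\rangle<\beta\}$, which (since $\dom g$ lies in that halfspace) is all relevant $x$; adding the trivial inequality $(f-g)(x)-(f-g)(a)\geq 0$ coordinate-appropriately, and using additivity of $c$ at $a$ because $\langle a,v^*\rangle<\beta$, gives $f(x)-f(a)\geq\langle x,y^*\rangle-\langle a,y^*\rangle-\varepsilon$ on $\dom f\cap\dom g$; one then extends to all of $\dom f$ by noting $f-g\equiv+\infty$ off $\dom g$ is not quite what is needed—rather, use that $a$ minimizes so $f(x)\geq g(x)+(f-g)(a)\geq\ldots$ Hmm. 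The main obstacle is precisely this domain-extension step: pushing the inequality from $\dom f\cap\dom g$ to all of $\dom f$, and handling the star-difference cleanly. I would therefore state the corollary's proof as: "Since $a$ globally minimizes $f-g$, it is an $\varepsilon'$-minimizer for $\varepsilon'=0$, so $\partial_{c,0}(f-g)(a)$ contains every $(0,v^*,\beta)$ with $\langle a,v^*\rangle<\beta$, $\dom(f-g)\subseteq H_{v^*,\beta}^<$. Fix $\varepsilon\geq 0$ and $(y^*,v^*,\beta)\in\partial_{c,\varepsilon}g(a)$. Apply Theorem \ref{lemma5} at $x=a$ with outer index $0$ and $\lambda=\varepsilon$ to the subgradient $(0,v^*,\beta)$; by Definition \ref{def:Minus_dot_operation}, $(0,v^*,\beta)+(y^*,v^*,\beta)\in\partial_{c,\varepsilon}f(a)$, and a direct check using additivity of $c$ and \eqref{relationepsilon} shows this element has the same first coordinate $y^*$ as required and that therefore $(y^*,v^*,\beta)\in\partial_{c,\varepsilon}f(a)$." I expect the additivity/absorption bookkeeping in that last check to be the one genuinely fiddly point; everything else is immediate from Theorem \ref{lemma5} and the definitions.
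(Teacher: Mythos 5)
Your final argument is correct and is essentially the paper's own proof: both exploit that global minimality of $f-g$ at $a$ places a trivial subgradient of the form $(0_{X^*},u^*,\alpha)$ in $\partial_{c,0}(f-g)(a)$, feed it into Theorem \ref{lemma5} with outer index $0$ and $\lambda=\varepsilon$, and then strip the additive shift via the product structure (\ref{relationepsilon}) together with $\dom f\subseteq\dom g$ (the paper uses $(0_{X^*},0_{X^*},\beta)$ while you use $(0_{X^*},v^*,\beta)$, which changes nothing since $H^{<}_{2v^*,2\beta}=H^{<}_{v^*,\beta}$). The one point you gesture at rather than prove is the domain bookkeeping: one must first dispose of the case $a\notin\dom g$, where $\partial_{c,\varepsilon}g(a)=\emptyset$ and the inclusion is vacuous, and note that otherwise finiteness of $(f-g)(a)$ plus global minimality force $\dom f\subseteq\dom g$ --- exactly the case split with which the paper opens its proof.
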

\begin{proof}
 The inclusion is evident in the case $a \in \dom f$ but $a \notin \dom g$, where  $\inf_X \{ f(x)-g(x) \}=-\infty$.
Now, let us assume the alternative possibility, $\dom f \subseteq \dom g$.\\
It is straightforward that for a proper function $h \colon X \to \Ramp$, a point $a \in \dom h$ is a global minimizer of $h$ if and only if $(0_{X^*},0_{X^*}, \beta) \in \partial_c h(a)$, for all $\beta >0$. Hence, $a \in \dom (f-g)=\dom f$ is a global minimizer of the problem $\inf_X \{ f(x)-g(x) \}$  if and only if $(0_{X^*},0_{X^*}, \beta) \in \partial_c (f-g)(a)$, for all $\beta >0$. According to Theorem \ref{lemma5}, it implies that 
$$(0_{X^*},0_{X^*}, \beta) \in \partial_{c,\varepsilon} f(a)\overset{*}{-}\partial_{c, \varepsilon} g(a),$$
for all $\varepsilon \geq 0$, hence $(0_{X^*},0_{X^*}, \beta) +\partial_{c,\varepsilon} g(a) \subseteq \partial_{c, \varepsilon} f(a)$. Due to identity (\ref{relationepsilon}), this inclusion implies that $\partial_{\varepsilon} g(a) \subseteq \partial_{\varepsilon} f(a)$. Moreover, since $\dom f \subseteq \dom g$, we will have $\partial_{c,\varepsilon} g(a) \subseteq \partial_{c, \varepsilon} f(a).$
\end{proof}

%
Next example shows that the necessary condition in Corollary \ref{cor1} for the existence of a global minimizer is not sufficient.
\begin{example}\label{example1}
Let $f,g \colon \R^2 \to \Ramp$,
\begin{equation*} 
f(x,y) =\left\{
\begin{array}{ll}
x\ln{x \over y} , & \text{ if } (x,y) \in E,\\ [0.2cm]
0, & \text{ if } (x,y)=(0,0),\\ [0.2cm]
+\infty, & \text{ otherwise,}%
\end{array}%
\right.
\end{equation*}%
where $E=\{ (x,y) \in \R^2: 0< x \leq 1, x\geq y, y >0\}$, and 
\begin{equation*}
g(x,y) =\left\{
\begin{array}{ll}
0 , & \text{ if } x+y<2,\\ [0.2cm]
+\infty, & \text{ otherwise,}%
\end{array}%
\right.
\end{equation*}
The proof of the e-convexity of the function $f$ can be found in \cite[Ex.4.3]{FGRVP2020}.
The e-convexity of $g$ comes directly from the e-convexity of its effective domain.\\
We have that $$\inf_{(x,y) \in \R^2} \{ f(x,y)-g(x,y)\}=f(1,1)-g(1,1)=-\infty$$ and $(1,1)$ is a global minimizer of the DC problem but $(0,0)$ is not. We apply identity (\ref{relationepsilon}) to compute $\partial_{c,\varepsilon} g(0,0)$ and  $\partial_{c, \varepsilon} f(0,0)$, and
$$\partial_{c,\varepsilon} g(0,0)=\partial_{\varepsilon} g(0,0) \times \{(v^*,\alpha) \in \R^3: \dom g \subseteq H^<_{v^*, \alpha}\},$$
$$\partial_{c,\varepsilon} f(0,0)=\partial_{\varepsilon} f(0,0) \times \{(v^*,\alpha) \in \R^3: \dom f \subseteq H^<_{v^*, \alpha}\}.$$
In this case, $ \dom f \subseteq \dom g$, and  $\dom f \subseteq H^<_{v^*, \alpha}$ whenever  $\dom g \subseteq H^<_{v^*, \alpha }$, which means that the only thing to prove for having the inclusion 

$$\partial_{c,\varepsilon} g(0,0) \subseteq \partial_{c, \varepsilon} f(0,0)$$
 is $\partial_{\varepsilon} g(0,0) \subseteq \partial_{\varepsilon} f(0,0)$. \\
 Then, let us take $(u,v) \in \partial_{\varepsilon} g(0,0)$. It means that $ux+vy \leq \varepsilon$, for all $(x,y) \in \dom g$. We will have, for all $(x,y) \in E$,
 $$ x\ln {x \over y} \geq 0 \geq ux+vy-\varepsilon,$$
and $0 \geq u0+v0-\varepsilon$, then
$$f(x,y) \geq f(0,0)+ux+vy-\varepsilon, \text{ for all } (x,y) \in \R^2$$
and $(u,v) \in \partial_{\varepsilon} f(0,0)$.
\end{example}

Finally, we close this section presenting a necessary condition for  $\varepsilon$-minimizers.
\begin{corollary}\label{thm:Local_minima}
Let $f,g \colon X \to \Ramp$ be proper functions with $g$ e-convex. If $a \in X$ is an $\varepsilon$-minimizer of $f-g$ then, for all $\lambda \geq 0$,
$$\partial_{c,\lambda} g(a) \subseteq \partial_{c,\varepsilon+\lambda} f(a).$$
\end{corollary}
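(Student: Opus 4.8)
The statement to prove is Corollary~\ref{thm:Local_minima}: if $a$ is an $\varepsilon$-minimizer of $f-g$ with $g$ e-convex, then $\partial_{c,\lambda} g(a) \subseteq \partial_{c,\varepsilon+\lambda} f(a)$ for all $\lambda \geq 0$. The plan is to reduce this to Theorem~\ref{lemma5}, exactly mirroring the proof of Corollary~\ref{cor1}, but now tracking the parameter $\varepsilon$ instead of specializing to the global-minimizer case.

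\textbf{Step 1: Dispose of the trivial domain case.} If $a \in \dom f$ but $a \notin \dom g$, then by the convention $+\infty-(+\infty)=+\infty$ we still need $a \in \dom(f-g)$; in fact for $a$ to be an $\varepsilon$-minimizer of $f-g$ we need $a \in \dom(f-g) = \dom f$, and if $a\notin\dom g$ then $(f-g)(a)=+\infty$, contradicting $a\in\dom(f-g)$. So I will argue that the relevant case is $\dom f \subseteq \dom g$ (so that whenever $\dom g \subseteq H^<_{u^*,\alpha}$ one also has $\dom f \subseteq H^<_{u^*,\alpha}$), and also note that if $\partial_{c,\lambda}g(a)=\emptyset$ the inclusion is trivial.

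\textbf{Step 2: Translate the $\varepsilon$-minimizer condition into a $c$-subdifferential membership.} For a proper function $h$, a point $a\in\dom h$ is an $\varepsilon$-minimizer of $h$ if and only if $h(x)\geq h(a)-\varepsilon$ for all $x$, which (since $c(a,(0_{X^*},0_{X^*},\beta))=0=c(x,(0_{X^*},0_{X^*},\beta))$ whenever $\beta>0$, as $\langle \cdot,0_{X^*}\rangle=0<\beta$) is equivalent to $(0_{X^*},0_{X^*},\beta)\in\partial_{c,\varepsilon} h(a)$ for all $\beta>0$. Applying this with $h=f-g$, the hypothesis gives $(0_{X^*},0_{X^*},\beta)\in\partial_{c,\varepsilon}(f-g)(a)$ for all $\beta>0$.

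\textbf{Step 3: Apply Theorem~\ref{lemma5} and unwind the star-difference.} By Theorem~\ref{lemma5}, $\partial_{c,\varepsilon}(f-g)(a)\subseteq \partial_{c,\varepsilon+\lambda}f(a)\overset{*}{-}\partial_{c,\lambda}g(a)$ for all $\lambda\geq 0$. Hence $(0_{X^*},0_{X^*},\beta)\in\partial_{c,\varepsilon+\lambda}f(a)\overset{*}{-}\partial_{c,\lambda}g(a)$, and by Definition~\ref{def:Minus_dot_operation} this means $(0_{X^*},0_{X^*},\beta)+\partial_{c,\lambda}g(a)\subseteq\partial_{c,\varepsilon+\lambda}f(a)$. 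Finally, using identity~(\ref{relationepsilon}) twice: writing $\partial_{c,\lambda}g(a)=\partial_\lambda g(a)\times V_g$ and $\partial_{c,\varepsilon+\lambda}f(a)=\partial_{\varepsilon+\lambda}f(a)\times V_f$ (with $V_g=\{(u^*,\alpha):\dom g\subseteq H^<_{u^*,\alpha}\}$ and similarly $V_f$), the translation by $(0_{X^*},0_{X^*},\beta)$ only shifts the $\alpha$-coordinate, and since $\beta>0$ is arbitrary the inclusion yields $\partial_\lambda g(a)\subseteq\partial_{\varepsilon+\lambda}f(a)$; combined with $\dom f\subseteq\dom g$ (so $V_g\subseteq V_f$) this gives the desired $\partial_{c,\lambda}g(a)\subseteq\partial_{c,\varepsilon+\lambda}f(a)$.

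\textbf{Main obstacle.} The only delicate point is the bookkeeping in Step~3, making sure that the arbitrary positive $\beta$ in the $\alpha$-slot, together with the product structure of~(\ref{relationepsilon}) and the domain inclusion $\dom f\subseteq\dom g$, really collapses to the clean inclusion of $\varepsilon$-subdifferentials and then back to $c$-subdifferentials; this is essentially the same maneuver already carried out at the end of the proof of Corollary~\ref{cor1}, so no genuinely new difficulty arises — the corollary is an immediate parametrized refinement of that argument.
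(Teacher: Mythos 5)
Your proposal follows essentially the same route as the paper: characterize the $\varepsilon$-minimizer property as $(0_{X^*},0_{X^*},\beta)\in\partial_{c,\varepsilon}(f-g)(a)$ for all $\beta>0$, feed this into Theorem~\ref{lemma5}, unwind the star-difference, and pass through the product decomposition (\ref{relationepsilon}) together with $\dom f\subseteq\dom g$ — this is exactly the paper's "continues along the lines of the proof of Corollary~\ref{cor1}". Steps 2 and 3 are correct. The only flaw is in Step 1: under the paper's conventions, for $a\in\dom f\setminus\dom g$ one has $(f-g)(a)=f(a)-(+\infty)=-\infty$, \emph{not} $+\infty$ (the convention $+\infty-(+\infty)=+\infty$ applies only when both values are $+\infty$), so such points do lie in $\dom(f-g)$ and are in fact automatically $\varepsilon$-minimizers — there is no contradiction to extract. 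The correct dichotomy, as in the paper, is on the domains: if $\dom f\not\subseteq\dom g$ then $\inf_X(f-g)=-\infty$ and the $\varepsilon$-minimizers are precisely the points of $\dom f\setminus\dom g$, at which $\partial_{c,\lambda}g(a)=\emptyset$ and the inclusion is trivial; otherwise $\dom f\subseteq\dom g$ holds globally, which is what Step 3 genuinely needs for $V_g\subseteq V_f$ (your pointwise claim that an $\varepsilon$-minimizer must lie in $\dom g$ would not by itself yield that). Since you do record the emptiness fallback, the slip is harmless and the argument is easily repaired, but as written the justification for assuming $\dom f\subseteq\dom g$ is not sound.
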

\begin{proof}

 The inclusion is clear in the case $\dom f \not \subseteq \dom g$, since the set of $\varepsilon$-minimizers is $\{ a\in X:a \in \dom f \setminus \dom g\}$, (in fact there is no difference between $\varepsilon$-minimizers and global minimizers), and for these points $\partial_{c,\lambda} g(a)=\emptyset$.
Now, let us assume $\dom f \subseteq \dom g$.\\
For a proper function $h \colon X \to \Ramp$, a point $a \in \dom h$ is an $\varepsilon$-minimizer of $h$ if and only if $(0_{X^*},0_{X^*}, \beta) \in \partial_{c,\varepsilon} h(a)$, for all $\beta >0$.  Hence, $a \in \dom (f-g)=\dom f$ is an $\varepsilon$-minimizer of the problem $\inf_X \{ f(x)-g(x) \}$  if and only if $(0_{X^*},0_{X^*}, \beta) \in \partial_{c,\varepsilon} (f-g)(a)$, for all $\beta >0$. According to Theorem \ref{lemma5}, it implies that 
$$(0_{X^*},0_{X^*}, \beta) \in \partial_{c,\varepsilon+\lambda} f(a)\overset{*}{-}\partial_{c, \varepsilon} g(a),$$
for all $\lambda \geq 0$, and the proof continues along the lines of the proof of Corollary \ref{cor1}.
\end{proof}

\begin{remark}
Example \ref{example1} shows that this necessary condition is not sufficient: for all $\varepsilon \geq 0$, $(0,0)$ in not an $\varepsilon$-minimizer but, for all $\lambda \geq 0$,
$$  \partial_{c,\lambda} g(0,0) \subseteq \partial_{c, \lambda} f(0,0) \subseteq \partial_{c, \varepsilon + \lambda} f(0,0).$$
\end{remark}
\begin{remark}
Local optimality necessary or sufficient conditions for DC problems where both functions are proper and convex (although convexity for $f$ is not essential) can be found in \cite{HU1988} and \cite {D2003}. Again they are expressed in terms of $\varepsilon$-subdifferential sets. 

A characterization of local optimality in the finite dimensional context  given in \cite[Th.~4.3]{BL2010}, assumes that the functions $f$ and $g$ are convex and lsc, and it is
 $$ \partial_{\varepsilon} g(a) \subseteq \bigcup_{\sigma \in [0, \varepsilon]} \left \lbrace{\partial_{\varepsilon} f(a)+B \Biggl ( 0_n, {{\varepsilon-\sigma} \over \eta}} \Biggr ) \right \rbrace, \text{   for all } \varepsilon >0,$$
where $B(x,\nu)$ stands for the ball of radius $\nu>0$ centered at $x \in \R^n$, and $\eta >0$ is any scalar for which $a$ is an optimum in the ball $B(a, \eta)$. It could also have its counterpart for e-convex functions and $\varepsilon$-$c$-subdifferential sets. Nevertheless, as it can be observed in \cite[Th.~4.3 Proof]{BL2010}), some further constraint qualifications are needed to split the $\varepsilon$-$c$-subdifferential of the sum of two e-convex functions. For more information on this, we encourage the reader to check \cite[Th.~11]{FVR2012}.
\end{remark}
%

\section{Conclusions and future research}
\label{sec:6}

Throughout this manuscript we have exploited the main properties that a subdifferential defined via a generalized conjugation scheme satisfies. With the purpose of generalizing some results from \cite{Z2002}, we have investigated the role of the $\varepsilon$-directional derivative and we have taken an insight on how the support function of the $c$-subdifferential may be derived.

As a theoretical application of the $c$-subdifferential, we have focused on the development of global optimality and $\varepsilon$-optimality conditions for DC problems. For problems whose objective function reads as the difference of two e-convex functions, which can be denoted by eDC problems, we have adapted well-known results from J.B. Hiriart-Hurruty in \cite{HU1988}, which turns out to give necessary but not sufficient conditions via $c$-subdifferentials.

Throughout the manuscript we have pointed out some open issues that, from our perspective, go beyond the scope of the paper and deserve to be studied thoroughly as a future research. We conclude the paper mentioning the application of the $c$-subdifferential in the study of Toland-Singer duality. This type of duality has become quite popular in the community of DC programming when the involved functions are proper convex and lsc, so we expect the $c$-subdifferential to lead the duality theory of eDC problems. 

\section*{Disclosure statement}
The authors declare that they have no conflict of interest.

\section*{Funding}
Research partially supported by MICIIN of Spain and ERDF of EU, Grant PGC2018 097960-B-C22.

\bibliographystyle{spmpsci}
\bibliography{biblio}

\begin{thebibliography}{10}
\providecommand{\url}[1]{{#1}}
\providecommand{\urlprefix}{URL }
\expandafter\ifx\csname urlstyle\endcsname\relax
  \providecommand{\doi}[1]{DOI~\discretionary{}{}{}#1}\else
  \providecommand{\doi}{DOI~\discretionary{}{}{}\begingroup
  \urlstyle{rm}\Url}\fi

\bibitem{AT2005}
An, L., Tao, P.: The {DC} ({D}ifference of {C}onvex functions) programming and
  {DCA} revisited with {DC} models of real world nonconvex optimization
  problems.
\newblock Annals Operations Research \textbf{133}, 23--46 (2005).
\newblock \doi{10.1007/s10479-004-5022-1}

\bibitem{BA1977}
Balder, E.: An extension of duality-stability relations to non-convex
  optimization problems.
\newblock SIAM J. Control Optimization \textbf{15}(2), 329--343 (1977).
\newblock \doi{10.1137/0315022}

\bibitem{BL2010}
Bomze, I., Lemar\'echal, C.: Necessary conditions for local optimality in
  difference-of-convex programming.
\newblock Journal of Convex Analysis \textbf{17}, 673--680 (2010)

\bibitem{CGR2018}
Carrizosa, E., Guerrero, V., Romero-Morales, D.: Visualizing data as objects by
  {DC} (difference of convex) optimization.
\newblock Mathematical Programming Ser. B \textbf{169}, 119--140 (2018).
\newblock \doi{10.1007/s10107-017-1156-1}

\bibitem{CLA2021}
Correa, R., L\'opez, M., P\'erez-Aros, P.: Necessary and sufficient optimality
  conditions in {DC} semi-infinite programming.
\newblock SIAM J. Optimization \textbf{31}(1), 837--865 (2021).
\newblock \doi{10.1137/19M1303320}

\bibitem{DML2002}
Daniilidis, A., Mart\'{\i}nez-Legaz, J.E.: Characterizations of evenly convex
  sets and evenly quasiconvex functions.
\newblock J. Math. Anal. Appl. \textbf{273}, 58--66 (2002).
\newblock \doi{10.1016/S0022-247X(02)00206-8}

\bibitem{DMD2009}
Dinh, N., Mordukhovich, B., Nguia, T.: Qualification and optimality conditions
  for {DC} programs with infinite constraints.
\newblock Acta Mathematica Vietnamica \textbf{34}(1), 125--155 (2009)

\bibitem{D2020}
Dolgopolik, M.: New global optimality conditions for nonsmooth {DC}
  optimization problems.
\newblock Journal of Global Optimization \textbf{76}, 25--55 (2020)

\bibitem{D2003}
Dür, M.: A parametric characterization of local optimality.
\newblock Mathematical Methods of Operations Research \textbf{57}, 101--109
  (2003).
\newblock \doi{10.1007/s001860200232}

\bibitem{DHL1998}
Dür, M., Horst, R., Locatelli, M.: Necessary and sufficient global optimality
  conditions for convex maximization revisited.
\newblock Journal of Mathematical Analysis and Applications \textbf{217},
  637--649 (1998)

\bibitem{ET1976}
Ekeland, I., Temam, R.: Convex {A}nalysis and {V}ariational {P}roblems.
\newblock North-Holland Publishing Company, Amsterdam (1976)

\bibitem{ET2006}
Ernst, E., Th\'era, M.: Necessary and sufficient conditions for the existence
  of a global maximun for convex functions in reflexive {B}anach spaces.
\newblock Journal of Convex Analysis \textbf{13}(3-4), 687--694 (2006)

\bibitem{FGRVP2020}
Fajardo, M., Goberna, M., Rodr\'iguez, M., Vicente-P\'erez, J.: Even
  {C}onvexity and {O}ptimization.
\newblock Springer, Cham (2020).
\newblock \doi{10.1007/978-3-030-53456-1}

\bibitem{FGV2021}
Fajardo, M.D., Grad, S., Vidal, J.: New duality results for evenly convex
  optimization problems.
\newblock Optimization \textbf{70}(9), 1837--1858 (2021).
\newblock \doi{10.1080/02331934.2020.1756287}

\bibitem{FVR2012}
Fajardo, M.D., Vicente-P\'{e}rez, J., Rodr\'{\i}guez, M.M.L.: Infimal
  convolution, {$c$}-subdifferentiability and {F}enchel duality in evenly
  convex optimization.
\newblock TOP \textbf{20}(2), 375--396 (2012).
\newblock \doi{10.1007/s11750-011-0208-6}

\bibitem{FV2017}
Fajardo, M.D., Vidal, J.: A comparison of alternative {$c$}-conjugate dual
  problems in infinite convex optimization.
\newblock Optimization \textbf{66}(5), 705--722 (2017).
\newblock \doi{10.1080/02331934.2017.1295046}

\bibitem{FV2020}
Fajardo, M.D., Vidal, J.: E$^\prime$-convex sets and functions: Properties and
  characterizations.
\newblock Vietnam Journal of Mathematics \textbf{48}(3), 407--423 (2020).
\newblock \doi{10.1007/s10013-020-00414-2}

\bibitem{FZ2014}
Fang, D., Zhao, X.: Local and global optimality conditions for {DC} infinite
  optimization problems.
\newblock Taiwanese Journal of Mathematics \textbf{18}(3), 817--834 (2014).
\newblock \doi{10.11650/tjm.18.2014.3888}

\bibitem{F1952}
Fenchel, W.: A remark on convex sets and polarity.
\newblock Comm. S\`{e}m. Math. Univ. Lund (Medd. Lunds Algebra Univ. Math.
  Sem.) \textbf{(Tome Suppl\'{e}mentaire)}, 82--89 (1952)

\bibitem{FML1998}
Flores-Bazán, F., Martínez-Legaz, J.: Simplified global optimality conditions
  in generalized conjugation theory.
\newblock In: Generalized Convexity, Generalized Monotonicity: Recent Results,
  \emph{Nonconvex Optimization and Its Applications}, vol.~27, pp. 237--292.
  Springer, Boston, MA (1998).
\newblock \doi{10.1007/978-1-4613-3341-8_14}

\bibitem{GJR2003}
Goberna, M.A., Jornet, V., Rodr\'{\i}guez, M.M.L.: On linear systems containing
  strict inequalities.
\newblock Linear Algebra Appl. \textbf{360}, 151--171 (2003).
\newblock \doi{10.1016/S0024-3795(02)00445-7}

\bibitem{GR2006}
Goberna, M.A., Rodr\'{\i}guez, M.M.L.: Analyzing linear systems containing
  strict inequalities via evenly convex hulls.
\newblock Eur. J. Oper. Res. \textbf{169}, 1079--1095 (2006)

\bibitem{HU1986}
Hiriart-Urruty, J.: Generalized differentiability duality and optimization for
  problems dealing with differences of convex functions.
\newblock In: Convexity and duality in optimization, \emph{Lectures Notes in
  Econom. and Math. Systems}, vol. 256, pp. 37--70. Springer, Berlin,
  Heidelberg (1985).
\newblock \doi{0.1007/978-3-642-45610-7_3}

\bibitem{HU1988}
Hiriart-Urruty, J.: From convex optimization to nonconvex optimization.
  {N}ecessary and sufficient conditions for global optimality.
\newblock In: Nonsmooth {O}ptimization and {R}elated {T}opics, \emph{Ettore
  Majorana International Sciences}, vol.~43, pp. 219--239. Springer, Boston, MA
  (1989).
\newblock \doi{10.1007/978-1-4757-6019-4_13}

\bibitem{HU2001}
Hiriart-Urruty, J.: Global optimality conditions in maximizing a convex
  quadratic function under convex quadratic constraints.
\newblock Journal of Global Optimization \textbf{21}, 445--455 (2001)

\bibitem{HT1999}
Horst, R., Thoai, N.: {DC} programming: Overview.
\newblock Journal of Optimization Theory and Applications \textbf{103}(1),
  1--43 (1999)

\bibitem{JG1996}
Jeyakumar, V., Glover, B.: Characterizing global optimality for {DC}
  optimization problems under convex inequality constraints.
\newblock Journal of Global Optimization \textbf{8}, 171--187 (1996)

\bibitem{JLi2011}
Jeyakumar, V., Li, G.: Necessary global optimality conditions for nonlinear
  programming problems with polynomial constraints.
\newblock Mathematical Programming Ser. A \textbf{126}, 393--399 (2011)

\bibitem{KMZ2007}
Klee, V., Maluta, E., Zanco, C.: Basic properties of evenly convex sets.
\newblock J. Convex Anal. \textbf{14}(1), 137--148 (2006)

\bibitem{ML1990}
Mart\'{\i}nez-Legaz, J.E.: Generalized conjugation and related topics.
\newblock In: Generalized convexity and fractional programming with economic
  applications, pp. 168--197. Springer-Verlag, Berlin (1990)

\bibitem{ML2005}
Mart\'{\i}nez-Legaz, J.E.: Generalized convex duality and its economic
  applications.
\newblock In: Handbook of generalized convexity and generalized monotonicity,
  \emph{Nonconvex Optim. Appl.}, vol.~76, pp. 237--292. Springer, New York
  (2005).
\newblock \doi{10.1007/0-387-23393-8_6}

\bibitem{MLS1992}
Mart\'{\i}nez-Legaz, J.E., Seeger, A.: A formula on the approximate
  subdifferential set of the difference of convex functions.
\newblock Bull. Austral Math. Soc. \textbf{45}, 37--41 (1992)

\bibitem{MLVP2011}
Mart\'{\i}nez-Legaz, J.E., Vicente-P\'{e}rez, J.: The e-support function of an
  e-convex set and conjugacy for e-convex functions.
\newblock J. Math. Anal. Appl. \textbf{376}, 602--612 (2011).
\newblock \doi{10.1016/j.jmaa.2010.10.058}

\bibitem{Mor1970}
Moreau, J.J.: Inf-convolution, sous-additivit\'{e}, convexit\'{e} des fonctions
  num\'{e}riques.
\newblock J. Math. Pures Appl. \textbf{49}, 109--154 (1970)

\bibitem{R1970}
Rockafellar, R.T.: Convex {A}nalysis.
\newblock Princeton University Press, Princeton (1970)

\bibitem{RVP2011}
Rodr\'{\i}guez, M.M.L., Vicente-P\'{e}rez, J.: On evenly convex functions.
\newblock J. Convex Anal. \textbf{18}, 721--736 (2011)

\bibitem{SFu2014}
Sun, X.K., Fu, H.: A note on optimality conditions for {DC} programs involving
  composite functions.
\newblock Abstract and Applied Analysis p.~6 (2014)

\bibitem{TA1997}
Tao, P., An, L.: Convex analysis approach to {D}.{C} programming: theory,
  algorithms and applications.
\newblock Acta Mathematica Vietnamica \textbf{22}(1), 289--355 (1997)

\bibitem{Z2013}
Zhang, Q.: A new necessary and sufficient global optimality condition for
  canonical {DC} problems.
\newblock Journal of Global Optimization \textbf{55}, 559--577 (2013).
\newblock \doi{10.1007/s10898-012-9908-1}

\bibitem{Z2002}
Z\u{a}linescu, C.: Convex analysis in general vector spaces.
\newblock World Scientific, New Jersey (2002)

\end{thebibliography}

\end{document}